\newlength{\defbaselineskip}
\newcommand{\setlinespacing}[1]%
           {\setlength{\baselineskip}{#1 \defbaselineskip}}
\numberwithin{equation}{section}
\newtheorem{thm}{Theorem}[section]
\newtheorem{lem}[thm]{Lemma}
\newtheorem{prop}[thm]{Proposition}
\theoremstyle{definition}
\theoremstyle{remark}
\newtheorem{rem}[thm]{Remark}
\numberwithin{equation}{section}
\begin{document}
\pagenumbering{arabic}\setcounter{page}{1}

\title[Dispersive quantization and fractalization]
{On dispersive quantization and fractalization for the Kawahara equation}

\author{Seongyeon Kim}

\thanks{This work was supported by the POSCO Science Fellowship of POSCO TJ Park Foundation.}

\subjclass[2020]{Primary: 42A38; Secondary: 42B35}
\keywords{Fourier transforms, Bourgain spaces}

\address{Department of Mathematics Education, Jeonju University, Jeonju 55069, Republic of Korea}
\email{sy\_kim@jj.ac.kr}

\begin{abstract}
In this paper, we investigate the dichotomous behavior of solutions to the Kawahara equation with bounded variation initial data, analogous to the Talbot effect. Specifically, we observe that the solution is quantized at rational times, whereas at irrational times, it is a nowhere continuous differentiable function with a fractal profile. This phenomenon, however, has not been explored for the Kawahara equation, which is a fifth-order KdV type equation. To achieve this, we derive smoothing estimates for the nonlinear Duhamel solution, which, when combined with the known results on the linear solution, provides a mathematical description of the Talbot effect.
\end{abstract}

\maketitle

\section{Introduction}\label{sec1}
In 1836, Talbot \cite{T1} made an experimental discovery that a plane wave, when transmitted through a periodic grating, produces a diffracted wave whose image is refocused to reproduce the original grating pattern at certain intervals. Subsequently, the interval, now referred to as the Talbot distance $d_T$, was determined by Rayleigh \cite{R1} to be $d_T = a^2 / \lambda$, where $a$ represents the grating's spacing and $\lambda$ the wavelength of the incoming wave.

The Talbot effect has been analyzed mathematically through the study of \textit{dispersive quantization} and \textit{fractalization} in solutions to linear dispersive PDEs.
In their work \cite{Be,BMS}, Berry and his collaborators demonstrated that with a step function as initial data, the solution to the linear Schrödinger equation manifests as piecewise constant yet exhibits discontinuities at $t\in\pi\mathbb{Q}$. Conversely, at times $t\not\in\pi\mathbb{Q}$, the solution remains continuous but becomes nowhere differentiable and develops a fractal-like pattern.
Further, Berry and Klein \cite{BK} elucidated the Talbot effect mathematically by showing that at each time $t\in d_T\mathbb{Q}$, the grating pattern is replicated a finite number of times, whereas at times $t\not\in d_T\mathbb{Q}$, the resulting images display a fractal and nowhere differentiable profile.

Moreover, numerous researchers have demonstrated that the pronounced dichotomy between quantization and fractalization at rational versus irrational times is present in linear dispersive equations featuring polynomial dispersion relations with integer coefficients on the torus.
Assuming that the initial data is of bounded variation (BV), Oskolkov \cite{Os} demonstrated that if the initial data includes discontinuities, then at $t\in\pi\mathbb{Q}$, the solution to any linear dispersive PDE on $\mathbb{T}$ (including the linear Schrödinger and Airy equations) remains a bounded function but may have a finite or countably infinite number of discontinuities.
Conversely, at times $t\not\in\pi\mathbb{Q}$, the solution becomes continuous yet nowhere differentiable.
Kapitanski and Rodnianski \cite{KR} subsequently found that the regularity of solutions to the linear Schrödinger equation improves at $t\not\in\pi\mathbb{Q}$.
Taylor \cite{Tay} specifically detailed the quantization effect at $t\in\pi\mathbb{Q}$ by illustrating that when $g$ is the delta function, the solution $e^{it\partial_{xx}}g$ transforms into a finite linear combination of shifted delta functions, with coefficients composed of Gauss sums.
Olver \cite{Ol} expanded this finding to include arbitrary order linear dispersive PDEs with integer coefficient polynomial dispersion relations.

An alternative method to mathematically analyze the Talbot effect involves studying the fractal dimension\footnote{The \emph{fractal dimension}, also referred to as \emph{upper Minkowski dimension}, of a set $S\subset \mathbb R^n$ is defined by $$\limsup_{\epsilon\to 0} \frac{\log(N(S,\epsilon))}{\log(1/\epsilon)},$$ where $N(S,\epsilon)$ is the minimum number of balls of radius $\epsilon$ required to cover $S$.} of the graph of solutions, which describes the fractalization behavior.
Specifically, Berry \cite{Be} proposed that for the $n$-dimensional linear Schrödinger equation, the graphs of $\Re u(x,t)$, $\Im u(x,t)$, and $|u(x,t)|^2$ exhibit a fractal dimension of $D=n+\frac{1}{2}$ at almost all irrational times $t$.
In the one-dimensional case, Rodnianski \cite{R2} confirmed that for any initial data $g\in \textnormal{BV}$ but not belonging to $\cup_{\epsilon>0} H^{\frac12+\epsilon}$, the graphs of the real and imaginary parts of the solution $e^{it\partial_{xx}}g$ display a fractal dimension of $D=\frac32$ at almost all irrational times, offering partial support for Berry's conjecture.
Subsequently, Chousionis, Erdo\u{g}an, and Tzirakis in \cite{CET} broadened this finding to encompass general linear dispersive equations with integer coefficient polynomial dispersion of any degree $d$ by considering initial data within the broader class $\textnormal{BV}\setminus\bigcup_{\epsilon >0}H^{\sigma_0+\epsilon}$, where $\frac{1}{2}\leq \sigma_0<\frac12+\frac1{2^d}$ (refer also to \cite{ET-book}).
Moreover, they resolved Berry's conjecture in one dimension by demonstrating that the graph of $|e^{it\partial_{xx}}g|^2$ measures a fractal dimension of $3/2$ for almost all $t\notin \pi\mathbb Q$, when $g$ is a step function with jumps solely at rational points.

Recently, it has been revealed that nonlinear dispersive equations also demonstrate the intriguing phenomena where the solution becomes quantized at $t\in\pi\mathbb{Q}$, and at $t\not\in\pi\mathbb{Q}$, it transforms into a continuous yet nowhere differentiable function with a fractal profile.
The studies documented in \cite{ET1,ET,CET,ES,ch-ol14} present findings within the nonlinear Schrödinger and Korteweg-de Vries (KdV) equations.
A crucial step in analyzing nonlinearity involves acquiring smoothing estimates for the nonlinear component. When these are integrated with existing findings about the linear component (refer to \cite{Os,R2,ET-book, ckskk} and related references), they contribute to understanding both the quantization and fractalization phenomena, as well as the fractal dimension.
Using this analytical framework, we explore the quantized and fractal behaviors of solutions to the Kawahara equation on the torus, a subject previously unstudied:
\begin{equation}\label{KA}
	\begin{aligned}
		&\partial_t u + \partial_x^5 u+\alpha \partial_x^3 u+ u\partial_xu=0,\ \ x\in\mathbb T=\mathbb R/(2\pi\mathbb{Z}),\ \,t\ge0,\\
		&u(x,0)=g(x)\in H^s(\mathbb T),
	\end{aligned}
\end{equation}
where $\alpha$ can be $-1, 0$ or $1$ and $u$ is a real-valued function.
This fifth-order KdV type equation is used to model phenomena such as shallow water waves with surface tension and magneto-sound propagation in plasma (refer to \cite{HS,Kaw,KS}).

Our first result concern the dichotomy between rational and irrational times.
\begin{thm}\label{Ta}
Let $u$ be the solution to \eqref{KA} with $g\in \text{BV}$. 
If $t \not\in \pi \mathbb{Q}$, then $u(x,t)$ is a continuous function of $x$. 
If $t \in \pi \mathbb{Q}$ and $g$ has at least one discontinuity on $\mathbb{T}$, then $u(x,t)$ is a bounded function with at most countably many discontinuity.
If $g$ is continuous, the $u(x,t)\in C_t^0C_x^0$.
\end{thm}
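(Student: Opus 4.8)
The plan is to split the solution into the free evolution plus the Duhamel term, to show that the Duhamel term is continuous in $x$ for \emph{every} $t\ge 0$, and then to read off all three assertions from the classical linear Talbot theory. Write $L=-(\partial_x^5+\alpha\partial_x^3)$, so that $e^{tL}$ has Fourier symbol $e^{-it(k^5-\alpha k^3)}$ with $k^5-\alpha k^3\in\mathbb Z$ for every $k\in\mathbb Z$ since $\alpha\in\{-1,0,1\}$. Duhamel's formula reads
\begin{equation*}
u(\cdot,t)=e^{tL}g-\tfrac12\int_0^t e^{(t-t')L}\,\partial_x\big(u(\cdot,t')^2\big)\,dt'=:e^{tL}g+\mathcal D(u)(t).
\end{equation*}
The first reduction I would make is to normalize the mean: with $m=\frac{1}{2\pi}\int_{\mathbb T}g$, the function $v(x,t):=u(x+mt,t)-m$ solves \eqref{KA} with the mean-zero datum $g-m$, and since $x\mapsto x+mt$ is a rigid rotation of $\mathbb T$ and subtracting a constant is harmless, $u(\cdot,t)$ and $v(\cdot,t)$ share exactly the same continuity, boundedness and countable-discontinuity properties. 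So I may assume $\int_{\mathbb T}g=0$, and then $\int_{\mathbb T}u(\cdot,t)=0$ for all $t$ because the nonlinearity is a perfect $x$-derivative.

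For the linear part I would invoke the known results. As $k\mapsto k^5-\alpha k^3$ is an integer-coefficient polynomial, the theorems of Oskolkov and Olver (see \cite{Os,Ol,ET-book}) apply to $e^{tL}g$ with $g\in BV(\mathbb T)$ and give precisely the stated dichotomy for the free solution: it is bounded for all $(x,t)$; continuous in $x$ when $t\notin\pi\mathbb Q$; a finite linear combination of translates of $g$ with Gauss-sum coefficients --- hence bounded with at most countably many discontinuities, these being present only if $g$ is discontinuous --- when $t\in\pi\mathbb Q$; and $e^{tL}g\in C_t^0C_x^0$ when $g$ is continuous. Here one uses that $BV(\mathbb T)$ is uniformly bounded, has at most countably many jumps, and embeds into $\bigcap_{\sigma<1/2}H^\sigma(\mathbb T)$.

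Everything then comes down to showing that $\mathcal D(u)(\cdot,t)$ is a continuous function of $x$ for all $t\ge 0$, which is where the nonlinear smoothing estimate of this paper enters. What I would prove (and what the rest of the paper is devoted to) is that for mean-zero $g\in H^s(\mathbb T)$, with $s$ in an interval containing $[0,\tfrac12)$, one has $\mathcal D(u)\in C\big([0,T];H^{s+a}(\mathbb T)\big)$ for some smoothing exponent $a>0$ that does not degenerate as $s\uparrow\tfrac12$, where $T$ is the existence time given by the low-regularity well-posedness theory of \eqref{KA} on $\mathbb T$ (iterated on successive intervals via the conserved $L^2$ norm). Taking $g\in BV\subset H^{1/2-\epsilon}$ with $\epsilon<a$ yields $\mathcal D(u)\in C_tH^{1/2+\delta}$ for some $\delta>0$, and the Sobolev embedding $H^{1/2+\delta}(\mathbb T)\hookrightarrow C(\mathbb T)$ gives $\mathcal D(u)\in C_t^0C_x^0$. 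Adding the two pieces: for $t\notin\pi\mathbb Q$, $u(\cdot,t)$ is continuous; for $t\in\pi\mathbb Q$ with $g$ discontinuous, $u(\cdot,t)$ is bounded with at most countably many discontinuities; and if $g$ is continuous then $u\in C_t^0C_x^0$.

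The genuinely hard part is the smoothing estimate itself, which I would establish by the restricted-norm ($X^{s,b}$) method adapted to $L$: the existence theory places $u$ in $X^{s,1/2+}$, and it suffices to prove the bilinear bound $\|\partial_x(u_1u_2)\|_{X^{s+a,-1/2+}}\lesssim\|u_1\|_{X^{s,1/2+}}\|u_2\|_{X^{s,1/2+}}$ for some $a>0$. The driving identity is the factorization of the resonance function: for $k=k_1+k_2$,
\begin{equation*}
(k^5-\alpha k^3)-(k_1^5-\alpha k_1^3)-(k_2^5-\alpha k_2^3)=k_1k_2k\big(5(k_1^2+k_1k_2+k_2^2)-3\alpha\big),
\end{equation*}
and since $k_1^2+k_1k_2+k_2^2\ge\tfrac12\max(k_1^2,k_2^2,k^2)$ with $|\alpha|\le 1$, off the purely low-frequency region this modulation is of size $\sim|k_1k_2k|\,\max(|k_1|,|k_2|,|k|)^2$ --- a far larger gain than the $3k_1k_2k$ of KdV --- which, after the usual Littlewood--Paley and modulation case analysis, more than compensates the single derivative in the nonlinearity. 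Crucially, the only resonances are $k_1k_2k=0$: the $k=0$ mode is annihilated by $\partial_x$, while $k_1=0$ or $k_2=0$ forces a factor $\widehat g(0)=0$ after the mean-zero normalization, so there is no resonant term to control. What remains is the careful bookkeeping of the high--high-to-low and large-modulation cases so that the estimate holds uniformly for $s$ up to $\tfrac12$, together with the check that it is compatible with the precise solution space produced by the well-posedness theorem; this is routine but technical, and is where the real work lies.
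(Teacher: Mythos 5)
Your high-level architecture matches the paper exactly: split $u=e^{tL}g+\mathcal D(u)$, show the Duhamel part lies in $C_t^0C_x^0$ via a nonlinear smoothing estimate, and read off all three assertions from Oskolkov's theorem and Olver's quantization for the linear flow. Your mean-normalization by the Galilean change of variables $v(x,t)=u(x+mt,t)-m$ is a correct alternative to the paper's device (absorbing the zero mode into the linear operator $L$ as a first-order term $-[\int_{\mathbb T}g]\partial_x$); both produce a mean-zero problem with the same symmetry and regularity properties, so this step is fine.

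The genuine divergence is in how the smoothing estimate is to be proved. You propose a direct bilinear bound
\[
\|\partial_x(u_1u_2)\|_{X^{s+a,-1/2+}}\lesssim\|u_1\|_{X^{s,1/2+}}\|u_2\|_{X^{s,1/2+}}
\]
for some fixed $a>0$, and then push it through Duhamel and the linear $X^{s,b}$ estimates. The paper does something different: it first performs a normal-form transformation (differentiation by parts on the Fourier side, exploiting the large resonance $k_1k_2k\{5(k_1^2+k_1k_2+k_2^2)-3\alpha\}$), which splits the Duhamel integral into a bilinear boundary term $\mathcal B(u,u)(t)-e^{tL}\mathcal B(u,u)(0)$ gaining essentially $3$ derivatives, resonant trilinear pieces $\rho,\sigma$ gaining $2$, and a nonresonant trilinear piece $\mathcal R$ controlled by $X^{s,b}$ machinery, for a net gain of $\min(2,2s+2)-$ derivatives. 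This is substantially more than what any direct bilinear estimate would yield, and that extra room is precisely what drives the sharper Theorem~\ref{Fr}. For Theorem~\ref{Ta}, which only needs a gain $>2\epsilon$ on $H^{1/2-\epsilon}$ data, either would be enough in principle.

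However, you are underselling the difficulty of your own route. On the torus, bilinear $X^{s,b}$ estimates are substantially weaker than their Euclidean counterparts because of lattice-point counting, and for periodic KdV this is exactly why the nonlinear smoothing in the literature (Erdo\u gan--Tzirakis) is proved via normal form rather than a direct bilinear gain. For Kawahara, the resonance is of size $|k_1k_2k|\max(|k_i|)^2$ rather than $|k_1k_2k|$, so a direct bilinear gain is more plausible; but the near-resonant region where the maximal modulation falls on the output variable, and the high--high interactions where counting eats into the modulation gain, are not ``routine'' and need to be checked. You should also be careful at the endpoint $b=1/2$: the local well-posedness theory (the paper's Theorem~\ref{thm2}) places $u$ in $Y^s_\delta$, controlling $X^{s,1/2}_\delta$ but not $X^{s,1/2+}_\delta$, so the bilinear estimate should be set up with $b=1/2$ on the input (or the Besov-modified $Y^s/Z^s$ spaces), not $b=1/2+$. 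If you simply adopt the paper's normal-form smoothing (Proposition~\ref{thm1}), the rest of your argument for Theorem~\ref{Ta} is correct and matches the paper's.
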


We further calculate the fractal dimension of the solution's graph at irrational time slices based on the regularity of the initial data.
\begin{thm}\label{Fr}
Let $u$ be a solution to \eqref{KA} with $g\in \textnormal{BV}$.
Suppose that\footnote{Since $g\in BV$ it follows that $\sigma_0\ge1/2$.}
\begin{equation*}
\sigma_0 := \sup \{\sigma \in \mathbb{R} : g \in H^{\sigma}\}<\frac{17}{32}.
\end{equation*}
Then, for almost all $t\in\mathbb{R}\setminus \pi \mathbb{Q}$,
the graph of the solution has upper Minkowski dimension $D\in[\frac{33}{16}-2\sigma_0,\frac{31}{16}]$.
Moreover, the graph of $|u(x,t)|^2$ has upper Minkowski dimension at most $\frac{31}{16}$. 
\end{thm}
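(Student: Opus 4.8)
The plan is to follow the standard paradigm for nonlinear Talbot problems (cf.\ \cite{ET,CET,ES,ch-ol14}): decompose the solution as $u=u_L+u_N$, where $u_L(t)=e^{-t(\partial_x^5+\alpha\partial_x^3)}g$ is the free evolution and
\[
u_N(t)=-\int_0^t e^{-(t-s)(\partial_x^5+\alpha\partial_x^3)}\big(u\,\partial_x u\big)(s)\,ds
\]
is the Duhamel term; prove that $u_N(\cdot,t)$ is smooth enough to be irrelevant for the graph dimension; and read off the rest from the known linear theory. Concretely, adding a $C^1(\mathbb T)$ function to $f$ leaves the upper Minkowski dimension of the graph $\Gamma(f):=\{(x,f(x)):x\in\mathbb T\}$ unchanged (the shear $(x,y)\mapsto(x,y+\psi(x))$ is bi-Lipschitz), so it suffices to prove $u_N(\cdot,t)\in C^1(\mathbb T)$ for every $t$; then $\dim\Gamma(u(\cdot,t))=\dim\Gamma(u_L(\cdot,t))$ (writing $\dim$ for upper Minkowski dimension), and for a.e.\ $t\notin\pi\mathbb Q$ the latter lies in $[\tfrac{33}{16}-2\sigma_0,\tfrac{31}{16}]$ by the results on linear dispersive equations on $\mathbb T$ with integer-coefficient polynomial dispersion of degree $d=5$ (see \cite{Os,R2,CET,ET-book}): the upper bound $\tfrac{31}{16}=2-2^{-(d-1)}$ reflects the Hölder-$(2^{-(d-1)})^-$ regularity of $u_L(\cdot,t)$ (coming from a $2^{d-1}$-fold Weyl differencing of the Gauss-type sums built from $e^{it\phi(k)}$), while the lower bound comes from a Rodnianski-type argument exploiting $g\notin\bigcup_{\epsilon>0}H^{\sigma_0+\epsilon}$ and is nontrivial (i.e.\ strictly above $1$) precisely when $\sigma_0<\tfrac12+2^{-d}=\tfrac{17}{32}$ — our hypothesis.

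For the set-up, $g\in BV(\mathbb T)$ forces $\widehat g(k)=O(|k|^{-1})$, so $g\in L^2(\mathbb T)$. The mean of $u$ is conserved, and the drift produced by a nonzero mean is removed by $x\mapsto x-ct$ with $c$ the mean of $g$, so we may assume $g$ has mean zero; then $\widehat u(0,t)\equiv0$. Since every term of \eqref{KA} is an exact $x$-derivative, $\|u(t)\|_{L^2}=\|g\|_{L^2}$ is conserved, and the known local well-posedness of the Kawahara equation at (and well below) $L^2$ regularity upgrades, via this conservation law, to global well-posedness with $u\in C(\mathbb R;L^2(\mathbb T))$. As the assertion is local in time, it suffices to control $u_N$ on a fixed interval $[0,T]$.

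The crux is the smoothing estimate $u_N(\cdot,t)\in H^a(\mathbb T)$ for some $a>\tfrac32$, uniformly on $[0,T]$, which I would establish by a normal-form reduction (differentiation by parts in $s$ in the Duhamel integral). On the Fourier side $\widehat{u\,\partial_x u}(k)=\tfrac{ik}{2}\sum_{k_1+k_2=k}\widehat u(k_1)\widehat u(k_2)$, and passing to the interaction representation puts in each interaction the oscillatory factor $e^{-is\Phi(k,k_1,k_2)}$, where with $\phi(\xi)=\xi^5-\alpha\xi^3$ and $k=k_1+k_2$,
\[
\Phi(k,k_1,k_2):=\phi(k)-\phi(k_1)-\phi(k_2)=k\,k_1\,k_2\big(5(k_1^2+k_1k_2+k_2^2)-3\alpha\big),
\]
using $k^5-k_1^5-k_2^5=5kk_1k_2(k_1^2+k_1k_2+k_2^2)$ and $k^3-k_1^3-k_2^3=3kk_1k_2$. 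After the mean-zero reduction $k,k_1,k_2$ are all nonzero on the support of the convolution and $k=k_1+k_2$, so two of $|k|,|k_1|,|k_2|$ are comparable to the maximum $M_{12}:=\max(|k|,|k_1|,|k_2|)$, which is at least $2$; together with $k_1^2+k_1k_2+k_2^2=\tfrac12(k^2+k_1^2+k_2^2)\ge\tfrac12 M_{12}^2$ and $|\alpha|\le1$ this gives the non-resonance bound
\[
|\Phi(k,k_1,k_2)|\;\gtrsim\; M_{12}^4\;\ge\;|k|^4\;>\;0 .
\]
One integration by parts in $s$ splits $u_N$ into a boundary term carrying the gain $\Phi^{-1}$ and a trilinear remainder (where $\partial_s$ lands on an input, which is replaced via the equation). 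In the boundary term the derivative $ik$ from the nonlinearity is dominated by $\Phi$, since $\langle k\rangle|\Phi|^{-1}\lesssim\langle k\rangle^{-3}$, and after performing the remaining (harmless) low-frequency summations by Young's inequality $\ell^2*\ell^1\hookrightarrow\ell^2$ this term gains almost three derivatives over the $L^2$-regularity of $u$. The trilinear remainder is more delicate: here one must exploit the full strength $|\Phi|\gtrsim M_{12}^4$ of the resonance together with the strong dispersive estimates of the fifth-order flow (e.g.\ $L^4$-type Strichartz / local smoothing), or perform a further normal-form step, to still obtain a gain exceeding $\tfrac32$ — this multilinear bookkeeping, and the treatment of the non-extremal interactions (high$\times$high$\to$high, high$\times$high$\to$low), is the one genuine obstacle. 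Granting it, $u_N\in C([0,T];H^a)$ with $a>\tfrac32$, hence $u_N\in C([0,T];C^1(\mathbb T))$ by the Sobolev embedding $H^a(\mathbb T)\hookrightarrow C^1(\mathbb T)$, $a>\tfrac32$; and since $\|u(t)\|_{L^2}$ is conserved, the local bound iterates to $u_N\in C(\mathbb R;C^1(\mathbb T))$.

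It remains to collect the consequences. For the graph of $u=u_L+u_N$ the $C^1$ term $u_N(\cdot,t)$ is irrelevant, so $\dim\Gamma(u(\cdot,t))=\dim\Gamma(u_L(\cdot,t))\in[\tfrac{33}{16}-2\sigma_0,\tfrac{31}{16}]$ for a.e.\ $t\notin\pi\mathbb Q$ by the linear theory recalled above. For $|u(x,t)|^2=u(x,t)^2$ (recall $u$ is real-valued), write $u^2=u_L^2+2u_Lu_N+u_N^2$: for a.e.\ irrational $t$ one has $u_L(\cdot,t)\in C^{0,\gamma}(\mathbb T)$ for every $\gamma<\tfrac1{16}$, while $u_N(\cdot,t)\in C^1(\mathbb T)\subset C^{0,\gamma}(\mathbb T)$, so each summand — hence $u^2(\cdot,t)$ — lies in $C^{0,\gamma}(\mathbb T)$; as the graph of a $C^{0,\gamma}$ function on a bounded interval has upper Minkowski dimension at most $2-\gamma$, letting $\gamma\uparrow\tfrac1{16}$ yields $\dim\Gamma(u^2(\cdot,t))\le\tfrac{31}{16}$.
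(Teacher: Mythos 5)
Your proposal is correct, and the dimension-transfer step is a genuinely different (and slightly more elementary) route than the paper's. The paper's proof tracks the function classes explicitly: it applies Proposition~\ref{thm1} to get $\mathcal N(\cdot,t)\in H^{\sigma_0+2-}\subset C^{\sigma_0+\frac32-}\cap B_{1,\infty}^{\sigma_0+2-}$, then combines this with Theorem~\ref{Linear} to conclude $u(\cdot,t)\in C^{\frac1{16}-}$ (upper bound via Lemma~\ref{3}, i.e.\ Falconer) and $u(\cdot,t)\notin B_{1,\infty}^{2\sigma_0-\frac1{16}+}$ (lower bound via Theorem~\ref{FDL}, i.e.\ Deliu--Jawerth, applied to the \emph{full} solution). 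You instead invoke the soft geometric fact that adding a Lipschitz ($C^1$) function is a bi-Lipschitz shear of the plane, hence preserves upper Minkowski dimension of the graph; this gives $\dim\Gamma(u(\cdot,t))=\dim\Gamma(e^{Lt}g)$ in one stroke and allows you to quote the linear dimension result directly, sidestepping the Besov bookkeeping at the nonlinear level. You also correctly observe this needs only $u_N\in H^{3/2+}$, which is weaker than what Proposition~\ref{thm1} delivers ($H^{\sigma_0+2-}$ with $\sigma_0\ge\frac12$), so there is ample room. The remaining ingredient is the smoothing estimate itself, which you sketch by differentiation by parts (normal form) plus $X^{s,b}$/Strichartz control of the trilinear remainder and honestly flag as the hard step; that is precisely the content and method of the paper's Proposition~\ref{thm1}, proved in Section~\ref{sec4}. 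Your $|u|^2$ argument (Hölder regularity of products) matches the paper's.

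One small remark: your mean-removal by the Galilean shift $x\mapsto x-ct$ is fine for the dimension statement since it is a bi-Lipschitz change of the $x$-variable; the paper instead folds the drift into the linear operator as $L=-\partial_x^5-\alpha\partial_x^3-[\int_{\mathbb T}g]\partial_x$, which is needed there because the Besov/Hölder memberships are stated for the exact linear propagator appearing in the Duhamel split. Both are consistent.
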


We end the introduction with some notations.
We define the Fourier coefficient of a $2\pi$-periodic function $f$ as $\widehat f(k)=\frac1{2\pi}\int_0^{2\pi} e^{-ikx}f(x) dx$.
For every $s\ge 0$, we denote the Sobolev space $H^s$ on $\mathbb T=\mathbb R/2\pi \mathbb Z$, which is equipped with the norm
\begin{equation*}
	\|f\|_{H^s} = \Big( \sum_{k\in\mathbb Z} \langle k\rangle^{2s} |\widehat f (k)|^2 \Big)^{1/2}
\end{equation*}
where $\langle k\rangle=(1+|k|^2)^{1/2}$.
Throughout this paper, we use the following notation if a Banach space of functions $\mathcal B^s$ decreases (in terms of set inclusion) with respect to a regularity index $s\in\mathbb R$:
\begin{equation*}
	\mathcal B^{s+} = \bigcup_{\epsilon>0} \mathcal B^{s+\epsilon} , \quad \mathcal B^{s-} = \bigcap_{\epsilon>0} \mathcal B^{s-\epsilon}.
\end{equation*}
Specifically, $\mathcal B^+:=\mathcal B^{0+}$.
For instance, $\mathcal B^{s}$ includes the Sobolev space $H^s$, the Besov space $B_{p,\infty}^s$, and the H\"older space $C^s$.
The letter C represents a positive constant which may differ in each instance. We also use $A\lesssim B$ to indicate $A\le CB$ with constants $C>0$ unspecified.

\

\noindent\textit{Outline of the paper.}
In Section \ref{sec2} we state the smoothing estimate (Proposition \ref{thm1}), which is crucially used in proofs of Theorems \ref{Ta} and \ref{Fr}. We then prove Theorems \ref{Ta} and \ref{Fr} by looking at linear and non-linear evolutions of the solution separately; the non-linear evolution lies in a smoother space due to the nonlinear smoothing property (Proposition \ref{thm1}), which, when combined with the known results on the linear evolution.
In Section \ref{sec3}, we establish local well-posedness for \eqref{KA} and prove the smoothing estimate in Section \ref{sec4}.
To this end, we make use of the mean-zero property of the initial data and the momentum conservation. We replace $u$ with $v=u-\int_\mathbb{T}g(x)dx$ using a harmless first order term $[\int_\mathbb{T}g(x)dx]\partial_xv$ so that the assumption on the initial data is satisfied. 
We note that this assumption can be removed by changing the linear operator in \eqref{KA} from $- \partial_x^5 - \alpha \partial_x^3$ to $- \partial_x^5 - \alpha \partial_x^3 - [\int_{\mathbb{T}}g(x)dx] \partial_x$.
We observe that after this replacement, nothing changes in all the calculations when adapting the methods used in the proofs. This observation was introduced by Bourgain in \cite{B} (see also \cite{CKSTT1,ET}).

\

\section{Talbot effect for the Kawahara equation}\label{sec2}
In this section we prove Theorems \ref{Ta} and \ref{Fr}.
Let us first formally decompose the solution to \eqref{KA} as 
\begin{equation}\label{s}
u(x,t) = e^{Lt}g + \mathcal N(x,t)
\end{equation}
where $L=-\partial_x^5 -\alpha \partial_x^3 -[\int_{\mathbb{T}}g(x)dx]\partial_x$ and  $\mathcal N(x,t)$ is the nonlinear Duhamel term of the solution. 
Here, the first order term in $L$ is arisen in removing the mean-zero assumption, and does not affect in the proofs at all, as mentioned in Section \ref{sec1}.
 
The key ingredient of the proofs is the following smoothing property for $\mathcal{N}(x,t)$, which will be shown in Section \ref{sec4}:
\begin{prop}\label{thm1}
Let $s>-1/2$, $0\leq s_1< \min(s+2,3s+2)$.
Let $u$ be the solution to \eqref{KA}.
Then we have  $$\mathcal N(x,t)\in C_t^0H_x^{s_1}.$$
\end{prop}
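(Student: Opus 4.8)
The plan is to set up a contraction/iteration scheme in a Bourgain space adapted to the fifth-order dispersion and then extract the smoothing gain by exploiting the structure of the resonance function. Concretely, write $\mathcal N(x,t)=-\int_0^t e^{L(t-t')}\big(u\partial_x u\big)(t')\,dt' = -\tfrac12\int_0^t e^{L(t-t')}\partial_x\big(u^2\big)(t')\,dt'$, and recall from Section~\ref{sec3} that the solution $u$ lies in the Bourgain space $X^{s,b}$ for some $b>1/2$ with norm $\|u\|_{X^{s,b}}=\|\langle k\rangle^s\langle \tau-\phi(k)\rangle^b\widehat u(k,\tau)\|_{\ell^2_kL^2_\tau}$, where $\phi(k)=k^5-\alpha k^3-m k$ is the dispersion relation ($m=\int_\mathbb T g$). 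By the standard energy/Duhamel estimate for such spaces, it suffices to prove the \emph{nonlinear smoothing bilinear estimate}
\begin{equation*}
\big\| \partial_x(u v) \big\|_{X^{s_1,b-1}} \lesssim \|u\|_{X^{s,b}}\,\|v\|_{X^{s,b}},
\qquad 0\le s_1<\min(s+2,3s+2),\ \ s>-\tfrac12,
\end{equation*}
for some $b>1/2$; then $\mathcal N\in X^{s_1,b}\hookrightarrow C^0_tH^{s_1}_x$ follows immediately. (One also needs the local solution to be continued to the fixed time interval, but $L^2$-based a~priori control from the momentum/energy conservation handles that, since $g\in BV\subset L^2$.)

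The core of the argument is the arithmetic of the resonance function. On the Fourier side the convolution forces $k=k_1+k_2$ and the denominator produced by $\langle\tau-\phi(k)\rangle^{1-b}$ must be compared with the product $\langle\tau_1-\phi(k_1)\rangle^{b}\langle\tau_2-\phi(k_2)\rangle^{b}$. The key identity is the factorization of
\begin{equation*}
\phi(k)-\phi(k_1)-\phi(k_2) = k^5-k_1^5-k_2^5-\alpha(k^3-k_1^3-k_2^3)
= 5k_1k_2(k_1+k_2)\Big(k_1^2+k_1k_2+k_2^2-\tfrac{3\alpha}{5}\Big),
\end{equation*}
so that on the bulk of frequency space $|\phi(k)-\phi(k_1)-\phi(k_2)|\gtrsim |k_1||k_2||k|\,\max(|k_1|,|k_2|)^2 \sim |k|\,(\min)(\max)^3$ (away from the thin set where the quadratic factor or one of $k_1,k_2,k$ vanishes). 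First I would do the generic case $|k|\le\max(|k_1|,|k_2|)=:N$: here the $\partial_x$ contributes a factor $|k|\lesssim N$, we lose $N^{-2s}$ from splitting $\langle k\rangle^{s_1}$ against $\langle k_1\rangle^{-s}\langle k_2\rangle^{-s}$ (when $s<0$ this is a loss, when $s\ge 0$ a gain), we need $\langle k\rangle^{s_1}$ worth of extra powers, and the large resonance $\gtrsim N^4$ (at least one of the modulation variables must be that big) supplies, via $\langle\cdot\rangle^{b-1}$ or trading with a $\langle\cdot\rangle^b$ factor, roughly $N^{4(1-b)}$ or $N^{4b}$ of smoothing; balancing $s_1 + 1 - 2s \lesssim 4b - \epsilon$ with $b\to 1/2^+$ gives the condition $s_1<2s+2$, and a more careful split of the $\langle k_1\rangle^{-s}\langle k_2\rangle^{-s}$ loss onto a single large frequency tightens $2s$ to $s$, yielding $s_1<s+2$; a symmetric accounting when \emph{both} $k_1,k_2$ are comparably large and the same sign (so $|k|\sim N$ too) gives the $3s+2$ branch. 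The remaining step is the exceptional regions — $k_1\approx 0$ or $k_2\approx 0$ (where $\partial_x$ acting on $u^2$ kills the bad term, or one factor is essentially at frequency $O(1)$ and is controlled by $L^2$), and the near-zero set of $k_1^2+k_1k_2+k_2^2-3\alpha/5$ which for $\alpha=\pm1$ is a bounded set of $(k_1,k_2)$ and hence harmless, while $k=0$ is removed by the mean-zero reduction. These are dispatched by crude $L^4_{x,t}$ Strichartz or $L^2$ estimates.

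The main obstacle I expect is \textbf{not} the bulk count but the bookkeeping in the \emph{high-high-to-low} interaction $k_1\sim -k_2\sim N$, $|k|\ll N$: here the first-order correction term $-mk$ in $\phi$ and the cubic factor $k_1^2+k_1k_2+k_2^2$ no longer dominates cheaply, one must extract the gain purely from the output frequency $|k|$ together with a single large modulation, and care is needed to see that the $\partial_x$ derivative (size $|k|$, which is \emph{small}) does not kill the smoothing — in fact it helps, which is the whole point of writing the nonlinearity as $\partial_x(u^2)$. Getting the endpoint dependence $\min(s+2,3s+2)$ exactly right (rather than some weaker $s+2-\epsilon$ with a worse constant) requires the sharp Bourgain-space $L^4$ bilinear estimate for the Kawahara group, which I would cite or reprove via a divisor-type counting of lattice points on the curve $\tau=\phi(k)$; this is where the degree-five dispersion genuinely pays off and is the technical heart of Section~\ref{sec4}.
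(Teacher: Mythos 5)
Your proposal is a direct bilinear $X^{s,b}$ smoothing estimate for $\partial_x(uv)$, and this is a genuinely different route from the paper, which does \emph{not} prove a bilinear smoothing estimate at all. The paper first applies a normal form transformation (differentiation by parts on the Fourier side in the variable $w_k=e^{i(k^5-\alpha k^3)t}u_k$), which produces a bilinear \emph{boundary} term $\mathcal B(u,u)$ carrying the full resonance denominator $k_1k_2\{5(k_1^2+k_2^2+k_1k_2)-3\alpha\}$, plus a \emph{cubic} bulk term which is then split into resonant pieces $\rho,\sigma$ (estimated pointwise in time) and a nonresonant piece $\mathcal R$ (estimated in $X^{s_1,-1/2+\epsilon}$ via an $L^6$ Strichartz bound). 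The point of the normal form is precisely to extract the \emph{entire} resonance modulus, rather than the fractional power $\langle\text{res}\rangle^{\min(b,1-b)}$ that a one-step $X^{s,b}$ bilinear estimate can harvest.

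This is not a cosmetic difference: the bilinear estimate you propose,
\begin{equation*}
\big\|\partial_x(uv)\big\|_{X^{s_1,b-1}}\lesssim \|u\|_{X^{s,b}}\|v\|_{X^{s,b}},\qquad b>\tfrac12,\ \ 0\le s_1<\min(s+2,3s+2),
\end{equation*}
is \textbf{false}. Take $\widehat u(k,\tau)=\widehat v(k,\tau)=\delta_{k,N}\,\chi(\tau-\phi(N))$ with $\chi$ a unit bump and $\phi(k)=k^5-\alpha k^3$. Then $\|u\|_{X^{s,b}}\sim N^{s}$, while $\widehat{uv}$ is supported at $k=2N$ with output modulation $\tau-\phi(2N)\approx 2\phi(N)-\phi(2N)=-30N^5+6\alpha N^3\sim N^5$, so
\begin{equation*}
\|\partial_x(uv)\|_{X^{s_1,b-1}}\sim N^{1+s_1}\,N^{5(b-1)},\qquad \text{hence the estimate forces }\ s_1\le 2s+4-5b<2s+\tfrac32.
\end{equation*}
For $s=0$ this gives at most $s_1<3/2$, not the claimed $s_1<2$; more generally $2s+3/2<\min(s+2,3s+2)$ for every $-1/2<s<1/2$, which is precisely the range needed for the application (BV data has $\sigma_0$ just above $1/2$). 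Your reported balance ``$s_1+1-2s\lesssim 4b$'' uses the wrong power of the resonance (it is $N^5$ in the high-high-high interaction, not $N^4$), and your ``tightening $2s$ to $s$'' step does not occur in the configuration that actually saturates the estimate, where $|k_1|=|k_2|=N$. The high-low case is even worse for a one-step bilinear bound (resonance $\sim N^4$, Sobolev cost $\sim N^{1+s_1-s}$, so $s_1<s+1$). To reach $s+3$ on the quadratic piece and $\min(s+2,3s+2)$ on the cubic piece you must integrate by parts in time first, exactly as the paper does; this is the missing idea.
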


\subsection{Proof of Theorem \ref{Ta}}
Since $g\in BV$ we see $g \in H^{\frac12-}$.
Then, by making use of the Proposition \ref{thm1}, we have 
$$ \mathcal{N}(x,t) \in C_t^0 H_x^{\frac52-}.$$
From the Sobolev embedding 
\begin{equation}\label{emb}
	H^s \hookrightarrow C^{s-\frac{1}{2}} \quad \textrm{for} \quad s>\frac{1}{2},
\end{equation}
we also have 
\begin{equation}\label{c}
	\mathcal{N}(x,t) \in C_t^0 C_x^{2-}.
\end{equation}

To address the linear solution $e^{Lt}g$, we utilize the following established result due to Oskolkov \cite{Os}:
\begin{thm}\label{1}
Let $L=-\partial_x^5 -\alpha \partial_x^3 -[\int_\mathbb{T}g(x)dx] \partial_x$ and 
suppose that $g\in BV$.
\begin{enumerate}
[leftmargin=0.6cm]
\item[$(\romannum{1})$] If $t\not \in \pi\mathbb{Q}$, then $e^{Lt}g$ is a continuous function of $x$. 
If $t\in\pi\mathbb{Q}$ and $g$ has at least one discontinuity on $\mathbb{T}$, then $e^{Lt}g$ is a bounded function with at most countably many discontinuities.
\item[$(\romannum{2})$] If $g$ is continuous, then $e^{Lt}g\in C_t^0 C_x^0$.
\end{enumerate}
\end{thm}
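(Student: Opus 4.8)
The plan is to complete the proof of Theorem~\ref{Ta} by combining the smoothing bound \eqref{c} for the Duhamel term with Theorem~\ref{1}, and to obtain Theorem~\ref{1} itself from the classical rational/irrational dichotomy after disposing of the first-order term in $L$. First I would record the reduction for the linear evolution: since $c:=\int_{\mathbb T}g(x)\,dx$ is a fixed real number, the factor $e^{-ct\partial_x}$ in $e^{Lt}$ is the spatial translation $f\mapsto f(\cdot-ct)$, and since all the Fourier multipliers defining $e^{Lt}$ commute we get $e^{Lt}g(x)=\big[e^{t(-\partial_x^5-\alpha\partial_x^3)}g\big](x-ct)$. Translation on $\mathbb T$ preserves continuity, boundedness, and the countability of the discontinuity set, so it suffices to prove Theorem~\ref{1} for $\widetilde L:=-\partial_x^5-\alpha\partial_x^3$, whose Fourier symbol $\phi(k)=-k^5+\alpha k^3$ has integer coefficients since $\alpha\in\{-1,0,1\}$; for such operators the statement is precisely the result of Oskolkov \cite{Os} (see also Olver \cite{Ol}), which I would invoke directly.

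For orientation, here is the mechanism behind Theorem~\ref{1}. At a rational time $t=2\pi p/q$ one has $\phi(k+q)\equiv\phi(k)\pmod q$, so $k\mapsto e^{it\phi(k)}$ is $q$-periodic; writing $k=qm+r$ with $0\le r<q$ and using the decimation identity $\sum_{m}\widehat g(qm+r)e^{i(qm+r)x}=\tfrac1q\sum_{j=0}^{q-1}e^{-2\pi ijr/q}g(x+2\pi j/q)$ exhibits $e^{t\widetilde L}g$ as a finite linear combination of translates of $g$; each such combination is again of bounded variation, hence bounded with at most countably many discontinuities, and continuous whenever $g$ is --- this is the quantization. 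At an irrational time one uses $g\in BV\Rightarrow\widehat g(k)=O(1/|k|)$ together with summation by parts to reduce the uniform convergence of $\sum_k\widehat g(k)e^{it\phi(k)+ikx}$ to the Weyl-type bound $\sum_{|k|\le N}e^{it\phi(k)+ikx}=o(N)$ uniformly in $x$, which holds because the leading coefficient of $\tfrac{t}{2\pi}\phi$ is irrational; the resulting limit is continuous.

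With Theorem~\ref{1} in hand the proof of Theorem~\ref{Ta} closes at once. Proposition~\ref{thm1} applies with $g\in BV\subset H^{\frac12-}$, so every $s_1<\tfrac52$ is admissible, and together with the embedding \eqref{emb} it gives $\mathcal N\in C_t^0C_x^{2-}$; in particular $\mathcal N(\cdot,t)$ is, for every $t$, a continuous --- hence bounded --- function of $x$, and since $C_t^0H_x^{s_1}\hookrightarrow C_t^0C_x^0$ for $s_1>\tfrac12$ we also have $\mathcal N\in C_t^0C_x^0$. Writing $u=e^{Lt}g+\mathcal N$ and using the elementary facts that a sum of continuous functions is continuous and that the sum of a bounded function with at most countably many discontinuities and a continuous function is again of that type, the three assertions of Theorem~\ref{Ta} follow from Theorem~\ref{1}$(\romannum{1})$ (the first two) and Theorem~\ref{1}$(\romannum{2})$ (the last); the decomposition \eqref{s} is available for all $t$ thanks to the well-posedness theory of Section~\ref{sec3} together with the conservation of the $L^2$ norm (note $BV(\mathbb T)\subset L^2$). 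I do not expect a real obstacle here: the genuine analytic input is Proposition~\ref{thm1}, which is deferred to Section~\ref{sec4}, whereas the only mildly delicate ingredient entering at this stage --- the irrational-time continuity of the linear flow --- is classical, and the sole Kawahara-specific point is the verification, carried out above, that the harmless first-order term really is harmless.
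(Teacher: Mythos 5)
Your route to Theorem~\ref{1} is the same as the paper's: the result is not proved from scratch but is quoted from Oskolkov \cite{Os}, with Remark~\ref{r} supplying the quantization side at rational times. The one genuine addition you make is the explicit reduction that factors the first-order part of $L$ out as the translation $f\mapsto f(\cdot-ct)$ with $c=\int_{\mathbb T}g$: since Oskolkov's theorem is stated for integer-coefficient polynomial symbols, and $c$ is generally irrational, this reduction is in fact needed to apply \cite{Os} literally, whereas the paper dismisses the first-order term informally (via the remark in the introduction that ``nothing changes in all the calculations'') and Remark~\ref{r} speaks simply of ``polynomial dispersion relation.'' Your reduction is the cleaner way to justify that claim, and it is correct: translation on $\mathbb T$ preserves continuity, boundedness, and the countability of the discontinuity set, and the two Fourier multipliers commute, so $[e^{Lt}g](x)=[e^{t\widetilde L}g](x-ct)$ with $\widetilde L=-\partial_x^5-\alpha\partial_x^3$ having symbol $\phi(k)=-k^5+\alpha k^3\in\mathbb Z[k]$ for $\alpha\in\{-1,0,1\}$. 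The paragraph you mark ``for orientation'' is a correct outline of Oskolkov's mechanism (decimation/Gauss-sum quantization at $t=2\pi p/q$, Weyl sums plus summation by parts at irrational $t$), but it is only a sketch --- in particular the summation-by-parts step requires control of $\sum_k|\widehat g(k)-\widehat g(k+1)|$, not merely $\widehat g(k)=O(1/|k|)$ --- which is fine since you defer the full argument to \cite{Os} anyway. Your closing paragraph re-derives Theorem~\ref{Ta} from Theorem~\ref{1} and Proposition~\ref{thm1}; it is correct and essentially reproduces the paper's Section~2.1, but it is not part of the statement under review.
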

\begin{rem}\label{r}
In fact, it is known that the theorem holds for one dimensional dispersive equations with polynomial dispersion relation in \cite{Os}.
The quantization results in \cite{BK,Tay,Ol} says that the linear solution $e^{Lt}g$ is a linear combination of finitely many translates of the initial data $g$ when $t\in\pi\mathbb{Q}$ (see \cite[Theorem 2.14]{ET-book}).
Thus, when $t\in\pi\mathbb{Q}$ and $g\in BV$ has at least one discontinuity, $e^{Lt}g\in BV$ and the possible discontinuities are at most countable.
\end{rem}

For $t\not\in\pi\mathbb{Q}$, combining Theorem \ref{1} $(\romannum{1})$ and \eqref{c} with \eqref{s} we conclude that $u(x,t)$ is also continuous.
If $t\in\pi\mathbb{Q}$ and $g$ is discontinuous, then $e^{Lt}g$ is of bounded variation with at most countably many discontinuities by Theorem \ref{1} $(\romannum{1})$ and Remark \ref{r}.
Combining this with \eqref{c}, we conclude that the solution is a discontinuous bounded function with at most countable discontinuity.

If $g$ is continuous, then $e^{Lt}g$ is also continuous on $\mathbb{T} \times \mathbb{R}$ by Theorem \ref{1} $(\romannum{2})$. Therefore, combining with \eqref{c} we see that $u(x,t)\in C_t^0C_x^0$.

\subsection{Proof of Theorem \ref{Fr}}
We begin by introducing the Besov space and its properties that we need. 
Let $\phi \in C^\infty_0([-2,-1/2] \cup [1/2,2])$ be such that $\sum_{j\in\mathbb Z}\phi(2^{-j}t)=1$ for $t\in\mathbb R\setminus\{0\}$, and let $\phi_0(t):=1-\sum_{j\ge1}\phi(2^{-j}t)$. We denote by $P_j$ the projections defined by  
\[	P_0f(x):=\sum_{k\in \mathbb Z} \phi_0(k)\widehat f(k)e^{ikx}, \quad P_j f(x):= \sum_{k\in \mathbb Z} \phi(2^{-j}k)\widehat f(k)e^{ikx}, \ \ j\ge 1.	\] 
For $1\le p\le\infty$ and $s\ge 0$, the Besov space $B^s_{p,\infty}$ on the periodic domain $\mathbb T$ is defined via the norm 
\begin{equation*}
\|f\|_{B_{p,\infty}^s} := 
\sup_{j\ge 0} 2^{sj}\|P_jf\|_{L^p(\mathbb{T})}.
\end{equation*}
It is well-known that 
$B^\alpha_{\infty, \infty} = C^\alpha$ for $\alpha\in (0,\infty)\setminus \mathbb N$ and 
$H^{\alpha}\in B_{1,\infty}^\alpha$ for $\alpha\ge0$ (see e.g., \cite{ST87,BL}).
By complex interpolation between Besov spaces (see \cite[Theorem 6.4.5]{BL}) and H\"older's inequality, we have, for $s_1\neq s_2$,
\begin{equation}\label{e:inter}
	\big(B_{1,\infty}^{s_1},B_{\infty,\infty}^{s_2} \big)_{[\frac{1}{2}]}=B_{2,\infty}^{\frac{s_1+s_2}{2}} \hookrightarrow B_{2,2}^{\frac{s_1+s_2}2 -} = H^{\frac{s_1+s_2}{2} -}.	
\end{equation}

Now we state the following theorem of Chousionis--Erdo\u{g}an--Tzirakis \cite{CET} (see also \cite{ET-book}) which is useful to prove Theorem \ref{Fr}.
\begin{thm} \label{Linear}
Let $\frac{1}{2}\leq \sigma_0 < \frac{17}{32}$. If $g\in BV\setminus H^{\sigma_0+}$, then for almost all $t\in\mathbb{R}\setminus \pi \mathbb{Q}$, we have $e^{Lt}g \in C^{\frac{1}{16}-} \setminus B_{1,\infty}^{2\sigma_0-\frac{1}{16}+}$.
\end{thm}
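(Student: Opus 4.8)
The plan is to follow the scheme of Chousionis--Erdo\u{g}an--Tzirakis \cite{CET} (see also \cite{ET-book}): the crux is a loss-free Weyl bound for the exponential sums defining $e^{Lt}g$, valid for almost every $t$ by a Diophantine analysis of the time variable, from which both halves of the conclusion follow by soft arguments. Write
\[
e^{Lt}g(x)=\sum_{k\in\mathbb Z}\widehat g(k)\,e^{i(kx+\omega_t(k))},\qquad \omega_t(k)=-t\big(k^{5}-\alpha k^{3}+ck\big),\quad c=\int_{\mathbb T}g(x)\,dx,
\]
so that only the leading term $-tk^{5}$ of the phase is relevant. Since $g\in BV$, Riemann--Stieltjes integration by parts gives $\widehat g(k)=\widehat{dg}(k)/(ik)$ for $k\neq0$, where $dg$ is a finite measure with $|\widehat{dg}(k)|\lesssim\|g\|_{BV}$; hence for each dyadic $M$,
\[
P_M e^{Lt}g(x)=\frac1{2\pi}\int_{\mathbb T}K_M(x-y,t)\,dg(y),\qquad K_M(z,t):=\sum_{k\sim M}\tfrac1{ik}\,e^{i(kz+\omega_t(k))},
\]
so $\|P_M e^{Lt}g\|_{L^\infty}\lesssim\|K_M(\cdot,t)\|_{L^\infty}\|g\|_{BV}$. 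Abel summation against the monotone weights $1/k$ on the dyadic block reduces $\|K_M(\cdot,t)\|_{L^\infty}$ to a bound on the partial exponential sums $\sum_{k\sim M}e^{i(kz+\omega_t(k))}$, uniform in $z$ and in the cutoff (the part $k<0$ being handled identically). Thus the entire theorem rests on showing that, for a.e.\ $t$ and every $\epsilon>0$,
\[
\sup_{z\in\mathbb T}\Big|\sum_{k\sim M}e^{i(kz+\omega_t(k))}\Big|\lesssim_{t,\epsilon}M^{15/16+\epsilon}\qquad\text{for all dyadic }M.
\]

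To obtain this I use two facts holding for a.e.\ $\tau\in\mathbb R$: by Dirichlet's theorem, for every $Q$ there are coprime $a,q$ with $q\le Q$ and $|\tau-a/q|\le(qQ)^{-1}$; and by Borel--Cantelli (summing $\sum_q q\cdot q^{-2-\delta}<\infty$), for every $\delta>0$ there is $C_{\tau,\delta}$ with $|\tau-a/q|\ge C_{\tau,\delta}q^{-2-\delta}$ for all reduced $a/q$; together these pin down a single full-measure set of $t$, independent of $\epsilon$. Applying them to a fixed multiple of $t$ with $Q=M$ forces the Dirichlet denominator into the window $q\in[c_\delta M^{1-\delta},M]$. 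Feeding this $q$ into Weyl's inequality for the degree-five polynomial phase $\tfrac1{2\pi}(kz+\omega_t(k))$ in $k$ --- whose exponent is $2^{1-5}=1/16$ and which is insensitive to its lower-order coefficients (in particular to the contributions of $\alpha$ and $c=\int_{\mathbb T}g$, and to $z$) --- gives
\[
\Big|\sum_{k\sim M}e^{i(kz+\omega_t(k))}\Big|\ \lesssim_\epsilon\ M^{1+\epsilon}\big(q^{-1}+M^{-1}+qM^{-5}\big)^{1/16}\ \lesssim_{t,\epsilon}\ M^{15/16+\epsilon},
\]
as required. Combining with the reductions above yields $\|P_M e^{Lt}g\|_{L^\infty}\lesssim_{t,\epsilon}M^{-1/16+\epsilon}$ for every dyadic $M$. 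Since $\|\partial_x P_M e^{Lt}g\|_{L^\infty}\lesssim M\|P_M e^{Lt}g\|_{L^\infty}$ (Bernstein), telescoping $e^{Lt}g=P_0e^{Lt}g+\sum_M P_M e^{Lt}g$ and splitting the dyadic sum at $M\sim|h|^{-1}$ gives $|e^{Lt}g(x+h)-e^{Lt}g(x)|\lesssim_{t,\epsilon}|h|^{1/16-\epsilon}$, i.e.\ $e^{Lt}g\in C^{1/16-}$. Since $\pi\mathbb Q$ is null this holds for a.e.\ $t\in\mathbb R\setminus\pi\mathbb Q$.

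For the non-membership I turn the $L^\infty$ bound around by interpolation. Because $g\notin H^{\sigma_0+}$, for every $\epsilon>0$ there is a sequence of dyadic scales $M_n\to\infty$ with $\|P_{M_n}g\|_{L^2}^2\gtrsim M_n^{-2\sigma_0-\epsilon}$ (else $\sum_M M^{2\sigma_0+\epsilon/2}\|P_M g\|_{L^2}^2<\infty$, contradicting $g\notin H^{\sigma_0+\epsilon/4}$). Using $\|f\|_{L^2}^2\le\|f\|_{L^1}\|f\|_{L^\infty}$ with $f=P_{M_n}e^{Lt}g$, the unitarity $\|P_{M_n}e^{Lt}g\|_{L^2}=\|P_{M_n}g\|_{L^2}$, and the bound $\|P_{M_n}e^{Lt}g\|_{L^\infty}\lesssim_{t,\epsilon}M_n^{-1/16+\epsilon}$ from the previous paragraph (valid on the same full-measure set of $t$), one gets
\[
\|P_{M_n}e^{Lt}g\|_{L^1}\ \ge\ \frac{\|P_{M_n}g\|_{L^2}^2}{\|P_{M_n}e^{Lt}g\|_{L^\infty}}\ \gtrsim_{t,\epsilon}\ M_n^{-2\sigma_0+1/16-2\epsilon},
\]
so $M_n^{2\sigma_0-1/16+3\epsilon}\|P_{M_n}e^{Lt}g\|_{L^1}\to\infty$ and hence $\sup_M M^{2\sigma_0-1/16+3\epsilon}\|P_M e^{Lt}g\|_{L^1}=\infty$; letting $\epsilon\downarrow0$ gives $e^{Lt}g\notin B_{1,\infty}^{2\sigma_0-1/16+}$. (The hypothesis $\sigma_0<17/32$ is not used in the argument itself; it is the range in which the exponents $1/16$ and $2\sigma_0-1/16$ are the meaningful ones and in which the associated graph-dimension statement of Theorem~\ref{Fr} is non-trivial.)

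The main obstacle is the exponential-sum bound, i.e.\ making Weyl's inequality loss-free for a.e.\ $t$: at each dyadic scale $M$ one must produce a rational approximation of $t$ whose denominator $q$ is small enough that $q^{-1/16}$ is a genuine gain yet not forced to be unusably large by a near-resonance; reconciling Dirichlet's theorem (which gives $q\le M$) with the a.e.\ anti-Liouville bound (which gives $q\gtrsim M^{1-\delta}$) is exactly what confines $q$ to $[c_\delta M^{1-\delta},M]$ and produces the exponent $15/16+\epsilon$. The Weyl exponent $1/16=2^{1-5}$ for degree five is the single source of all the numerology in the statement. Everything else --- the $BV$ integration by parts, the Abel summation, Bernstein's inequality and the $L^1$--$L^2$--$L^\infty$ interpolation --- is soft; in particular the first-order term $[\int_{\mathbb T}g]\,\partial_x$ contributes only an order-one term to the phase, so Weyl's inequality, which sees only the leading coefficient, is untouched --- the precise sense in which, as remarked in Section~\ref{sec1}, that term ``does not affect the proof.''
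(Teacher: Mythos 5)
Your argument is correct and is essentially the proof the paper relies on: the paper does not prove Theorem \ref{Linear} itself but quotes it from \cite{CET} (see also \cite{ET-book}), where the H\"older part is obtained exactly as you do (BV summation by parts, Dirichlet's theorem plus an a.e.\ Diophantine lower bound pinning $q\in[c_\delta M^{1-\delta},M]$, and Weyl's inequality with exponent $2^{1-5}=\frac1{16}$, which is insensitive to the lower-order and non-integer coefficients coming from $\alpha$ and $\int_{\mathbb T}g$), and the Besov non-membership follows from $g\notin H^{\sigma_0+}$, unitarity of $e^{Lt}$, and the $L^1$--$L^2$--$L^\infty$ bound on Littlewood--Paley pieces, which is the concrete form of the interpolation embedding \eqref{e:inter} that the paper itself uses to derive \eqref{11} in the proof of Theorem \ref{Fr}. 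Your side remark that $\sigma_0<\frac{17}{32}$ is only needed so that $2\sigma_0-\frac1{16}<1$ (i.e.\ for the dimension bound via Theorem \ref{FDL}) is likewise consistent with how the paper uses the statement.
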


We initially prove the upper bound of $D$ is $\frac{31}{16}$.
Given that $g$ belongs to $H^{\sigma_0 -}$ as defined by $\sigma_0$, we derive
\begin{equation}\label{smo1}
	\mathcal N(\cdot,t)\in H^{\sigma_0+2-},\quad \forall  t\in\mathbb{R}
\end{equation}
from Proposition \ref{thm1}.
Therefore, according to the Sobolev embedding \eqref{emb}, we obtain
\begin{equation}\label{smo}
	\mathcal N(\cdot,t)\in C^{\sigma_0+\frac32-}, \quad \forall t\in\mathbb{R}.
\end{equation}
On the other hand, by Theorem \ref{Linear}, we have 
\begin{equation}\label{sm}
e^{Lt}g\in C^{\frac1{16}-} \quad \textrm{for almost all}\,\, t\in\mathbb{R}\setminus \pi \mathbb{Q}. 
\end{equation}
Hence, from \eqref{s},\eqref{smo} and \eqref{sm} we have 
\begin{equation}\label{2}
u(\cdot,t)\in C^{\min(\sigma_0+\frac32,\frac1{16})-}=C^{\frac1{16}-} \quad \textrm{for almost all}\,\, t\in\mathbb{R}\setminus \pi \mathbb{Q}.
\end{equation}

We now use the following classical result on the upper Minkowski dimension for H\"older continuous functions:
\begin{lem}\cite[Corollary 11.2]{F}\label{3}
Let $0\leq \alpha \leq 1$. If a function $f:\mathbb{T} \rightarrow\mathbb{R}$ is in $C^{\alpha}$, then the upper Minkowski dimension of the graph of $f$ is at most $2-\alpha$.
\end{lem}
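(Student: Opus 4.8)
The plan is a direct box-counting estimate straight from the definition of the upper Minkowski dimension; no machinery beyond the $C^\alpha$ bound is needed. Since $f$ is continuous on the compact circle it is bounded, so after fixing the fundamental domain $[0,2\pi)$ the graph $\Gamma := \{(x,f(x)) : x\in\mathbb{T}\}$ is a bounded subset of $\mathbb{R}^2$, and it suffices to estimate $N(\Gamma,\delta)$, the number of axis-parallel squares of side $\delta$ needed to cover $\Gamma$, as $\delta\to0^+$; covering by $\delta$-squares and by $\delta$-balls agree up to a fixed multiplicative constant, which does not affect the $\limsup$.

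First I would fix $\delta\in(0,1)$, set $N_\delta := \lceil 2\pi/\delta\rceil \sim \delta^{-1}$, and split $[0,2\pi)$ into arcs $I_1,\dots,I_{N_\delta}$ of length $\le\delta$. Second, on each arc the Hölder bound produces a constant $C=C(f)$ with
$$\sup_{x,y\in I_j}|f(x)-f(y)| \le C\,\delta^\alpha,$$
so the portion of $\Gamma$ lying over $I_j$ is contained in a rectangle of width $\le\delta$ and height $\le C\delta^\alpha$; since $0\le\alpha\le1$ we have $\delta^{\alpha-1}\ge1$, whence this rectangle is covered by at most $C\delta^{\alpha-1}+1 \lesssim \delta^{\alpha-1}$ squares of side $\delta$. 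Third, summing over $j$ gives
$$N(\Gamma,\delta) \lesssim N_\delta\cdot\delta^{\alpha-1} \lesssim \delta^{\alpha-2},$$
and therefore
$$\limsup_{\delta\to0^+}\frac{\log N(\Gamma,\delta)}{\log(1/\delta)} \le \limsup_{\delta\to0^+}\frac{(\alpha-2)\log\delta + O(1)}{-\log\delta} = 2-\alpha,$$
which is the asserted bound.

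The only step requiring a little care is the per-arc count: one must invoke $\alpha\le1$ precisely to guarantee $\delta^{\alpha-1}\ge1$, so that the additive rounding term ``$+1$'' is absorbed into the main term rather than dominating after summation over the $\sim\delta^{-1}$ arcs. It is also worth noting that the bound is qualitatively sharp at both endpoints of the range of $\alpha$: for $\alpha=0$ it degenerates to the trivial value $2$ (the graph lies in the plane), while as $\alpha\to1$ it tends to $1$, the dimension of a rectifiable curve. Everything else — the choice of fundamental domain, the comparison between square-covers and ball-covers, and the evaluation of the $\limsup$ — is routine.
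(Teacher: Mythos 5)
Your proof is correct and is essentially the standard box-counting argument underlying Falconer's Proposition 11.1 and Corollary 11.2, which the paper cites without reproving. The per-column oscillation estimate, the use of $\alpha\le 1$ to absorb the ``$+1$'' rounding term into $\delta^{\alpha-1}$, and the evaluation of the $\limsup$ are all handled properly.
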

Indeed, applying \eqref{2} to Lemma \ref{3}, the upper Minkowski dimension of the graph of $u(x,t)$  is at most $2-1/{16}={31}/{16}$.
Just by replacing $u(x,t)$ with $|u(x,t)|^2$ in the argument employed for obtaining \eqref{2}, it follows that the upper bound of $D$ is at most $31/16$.

It remains to prove that the lower bound of $D$ is $\frac{33}{16}-2\sigma_0$. 
The lower bound follows from the following theorem of Deliu--Jawerth \cite{DJ}: 
\begin{thm}[\cite{DJ}]\label{FDL}
Let $0<s<1$. If $f\colon\mathbb{T}\to \mathbb{R}$ is continuous and $f\notin B_{1,\infty}^{s+}$, then the upper Minkowski dimension of the graph of $f$ is at least  $2-s$.
\end{thm}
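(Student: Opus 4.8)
\textit{Proof plan.} The plan is to prove the lower bound $\overline{\dim}_B(\mathrm{graph}\,f)\ge 2-s$ by the classical route that ties the box-counting function of a graph to the $L^1$ modulus of continuity of $f$, and hence to membership in the Besov scale $B^\sigma_{1,\infty}$; the assertion then follows by contraposition of the hypothesis $f\notin B^{s+}_{1,\infty}$. Throughout I replace covering by balls of radius $\delta$ (as in the definition of upper Minkowski dimension) by covering by axis-parallel grid squares of side $\delta$: the two covering numbers are comparable up to fixed constants, allowing an inessential rescaling of the scale parameter, neither of which affects the $\limsup$ defining the dimension, so it suffices to work with grid squares. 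Write $\Gamma:=\{(x,f(x)):x\in\mathbb T\}$.

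First I would record the elementary geometric lower bound for the covering number. Fix $\delta>0$, partition $\mathbb T$ into the $\asymp\delta^{-1}$ consecutive intervals $I_1,\dots,I_M$ of length $\delta$, put $\mathrm{osc}_I f:=\sup_I f-\inf_I f$, and set $\Omega(f,\delta):=\sum_j \mathrm{osc}_{I_j}f$ (finite since $f$ is continuous on a compact set). Over each column $I_j\times\mathbb R$ the curve $\{(x,f(x)):x\in I_j\}$ projects, by the intermediate value theorem, onto an interval of length exactly $\mathrm{osc}_{I_j}f$; hence any family of side-$\delta$ grid squares covering it has at least $\lceil \mathrm{osc}_{I_j}f/\delta\rceil$ members. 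Summing in $j$,
\[
N(\Gamma,\delta)\ \gtrsim\ \delta^{-1}\,\Omega(f,\delta),
\]
so $\overline{\dim}_B\Gamma\ge 1+\limsup_{\delta\to0}\tfrac{\log\Omega(f,\delta)}{\log(1/\delta)}$, and the theorem reduces to showing this $\limsup$ is at least $1-s$.

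Next I would connect $\Omega(f,\delta)$ to the first-difference modulus $\omega_1(f,\delta):=\sup_{0<h\le\delta}\|f(\cdot+h)-f\|_{L^1(\mathbb T)}$. If $0<h\le\delta$, then for each $x$ both $x$ and $x+h$ lie in some $I_j\cup I_{j+1}$, whence $|f(x+h)-f(x)|\le \mathrm{osc}_{I_j}f+\mathrm{osc}_{I_{j+1}}f$; integrating over each $I_j$ and summing gives $\|f(\cdot+h)-f\|_{L^1}\lesssim \delta\,\Omega(f,\delta)$, i.e. $\omega_1(f,\delta)\lesssim\delta\,\Omega(f,\delta)$. Combined with the previous display this yields $N(\Gamma,\delta)\gtrsim\delta^{-2}\,\omega_1(f,\delta)$. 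I would then invoke the classical modulus-of-continuity characterization of Besov spaces (see e.g. \cite{BL,ST87}): for every $\sigma\in(0,1)$ one has $f\in B^\sigma_{1,\infty}$ iff $\sup_{\delta>0}\delta^{-\sigma}\omega_1(f,\delta)<\infty$; since $\omega_1(f,\cdot)$ is nondecreasing it suffices to control it along $\delta=2^{-m}$.

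Finally I would close by contraposition. Assume $\overline{\dim}_B\Gamma<2-s$ and pick $\epsilon>0$ with $\overline{\dim}_B\Gamma<2-s-2\epsilon$, so that $N(\Gamma,2^{-m})\le 2^{m(2-s-\epsilon)}$ for all large $m$; then $N(\Gamma,\delta)\gtrsim\delta^{-2}\omega_1(f,\delta)$ forces $\omega_1(f,2^{-m})\lesssim 2^{-m(s+\epsilon)}$, and by monotonicity $\omega_1(f,\delta)\lesssim\delta^{s+\epsilon}$ for all small $\delta$. Taking $\sigma:=\min\{\,s+\epsilon,\ (1+s)/2\,\}\in(s,1)$ gives $\sup_\delta\delta^{-\sigma}\omega_1(f,\delta)<\infty$, i.e. $f\in B^\sigma_{1,\infty}\subset B^{s+}_{1,\infty}$, contradicting the hypothesis; hence $\overline{\dim}_B\Gamma\ge 2-s$. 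The only mildly delicate points are the two comparisons that bracket the argument — the ball-versus-grid and dyadic-versus-continuum reductions, and the passage from the oscillation sum to the $B^\sigma_{1,\infty}$ scale, including the otherwise harmless case $s+\epsilon\ge 1$, which is absorbed by the truncation $\sigma=(1+s)/2$; the remaining work is bookkeeping.
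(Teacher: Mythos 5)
Your argument is correct. Note, however, that the paper does not prove this statement at all: it is quoted verbatim from Deliu--Jawerth \cite{DJ} and used as a black box, so there is no in-paper proof to compare against. What you supply is essentially the standard (and essentially Deliu--Jawerth's) route, specialized to the one-sided bound actually needed: the Falconer-type column-counting estimate $N(\Gamma,\delta)\gtrsim\delta^{-1}\sum_j\mathrm{osc}_{I_j}f$, the elementary comparison $\|f(\cdot+h)-f\|_{L^1}\lesssim\delta\sum_j\mathrm{osc}_{I_j}f$ for $0<h\le\delta$, and the modulus-of-smoothness characterization of $B^{\sigma}_{1,\infty}$ for $0<\sigma<1$ (which is equivalent to the Littlewood--Paley definition used in the paper), closed by contraposition against $f\notin B^{s+}_{1,\infty}$. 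The small technical points you flag are handled correctly: mesh squares in distinct columns are distinct so the per-column counts add, the grid-versus-ball and dyadic-versus-continuum reductions only change constants and hence not the $\limsup$, and the truncation $\sigma=\min\{s+\epsilon,(1+s)/2\}$ keeps you inside the range where the first-order modulus characterizes the Besov norm. The only difference from the cited source is scope: \cite{DJ} establishes a finer two-sided relation between box dimension of graphs and Besov smoothness, whereas your argument gives exactly the lower bound invoked in the paper, with the benefit of being short and self-contained.
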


Since $g\not\in H^{\sigma_0+}$ and $\|g\|_{H^{s}}=\|e^{Lt}g\|_{H^s}$, one can easily see that $e^{Lt}g \not \in {H^{\sigma_0+}}$.
From \eqref{sm} with $C^{\frac1{16}-}=B_{\infty,\infty}^{\frac1{16}-}$ and the embedding $	B_{1,\infty}^{s_1}\cap B_{\infty,\infty}^{s_2} \subset H^{\frac{s_1+s_2}{2} -}$ (see \eqref{e:inter}), we have 
\begin{equation}\label{11}
e^{Lt}g\not\in B_{1,\infty}^{2\sigma_0-\frac{1}{16}+}
\end{equation}
for almost all $t\in\mathbb{R}\setminus\pi\mathbb{Q}$.
On the other hand, by \eqref{smo1} with $H^{\sigma_0+2-}\in B_{1,\infty}^{\sigma_0+2-}$, we see, for all $t\in\mathbb{R}$, 
\begin{equation}\label{12}
\mathcal N(\cdot,t)\in B_{1,\infty}^{\sigma_0+2-}.
\end{equation} 

Combining \eqref{11} and \eqref{12}, then we have
$$u(\cdot,t)= \underbrace{e^{Lt}g}_{\notin B^{2\sigma_0-\frac{1}{16}+}_{1,\infty}\text{ for a.e. }t} 
+\underbrace{\mathcal N(\cdot,t)}_{\in B^{\sigma_0+2-}_{1,\infty} \text{ for all }t}.$$
Hence we conclude that if $2\sigma_0 -\frac{1}{16}<\sigma_0 +2$, then for almost all $t\in\mathbb{R}\setminus \pi\mathbb{Q}$ the solution does not belong to $B_{1,\infty}^{2\sigma_0 -\frac{1}{16}+}$.
Therefore, by Theorem \ref{FDL}, the graph of the solution has Minkowski dimension at least $2-(2\sigma_0-\frac{1}{16})=\frac{33}{16}-2\sigma_0$.

\section{Local well-posedness}\label{sec3}
In this section, we establish the local well-posedness of the Cauchy problem \eqref{KA}.
We begin by revisiting established results concerning the Cauchy problem \eqref{KA}.
In \cite{H}, Hirayama demonstrated that the solution is locally well-posed in $\dot{H}^s(\mathbb T)$ for $s\ge-1$ by making use of the argument employed for the KdV equation in \cite{KPV} .
Kato \cite{Kat2} enhanced the result to $s\ge-3/2$, and proved global well-posedness for $s\ge-1$ as well as $C^3$-ill-posedness for $s<-3/2$.

Before presenting our well-posedness result, we need to define the appropriate function spaces.
Let $X^{s,b}$ denote the Bourgain space under the norm
$$\|u\|_{X^{s,b}}:=\|\langle k\rangle^s\langle\tau+ k^5-\alpha k^3\rangle^b\widehat u(k,\tau)\|_{l_k^2L_\tau^2}.$$
We further define, for any $\delta\ge0$, the restricted space $X_{\delta}^{s,b}$ using the norm
$$\|u\|_{X_\delta^{s,b}}:=\inf_{\tilde u=u\ \text{on}\ t\in[0,\delta]}\|\widetilde u\|_{X^{s,b}}.$$
Additionally, we introduce the $Y^s$- and $Z^s$-spaces, inspired by the idea of Bourgain \cite{B} (cf. \cite{GTV,CKSTT1}),
\begin{align*}
	&\|u\|_{Y^s}:=\|u\|_{X^{s,\frac12}}+\|\langle k\rangle^s\widehat u(k,\tau)\|_{l_k^2L_\tau^1},\\
	&\|u\|_{Z^s}:=\|u\|_{X^{s,-\frac12}}+\bigg\|\frac{\langle k\rangle^s\widehat u(k,\tau)}{\langle\tau+k^5-\alpha k^3\rangle}\bigg\|_{l_k^2L_\tau^1}.
\end{align*}
Similarly, we specify $Y_\delta^s$, $Z_\delta^s$.
We note that if $u\in Y^s$ then $u\in L_t^\infty H_x^s$ (indeed, $Y^s$  is embedded into $C_tH_x^s$); however the $X^{s,\frac12}$-norm alone does not suffice to control the $L_t^\infty H_x^s$-norm of the solution.

The following theorem shows the local well-posedness of \eqref{KA} in $H^s$ with $s\ge0$.
\begin{thm}\label{thm2}
The Cauchy problem \eqref{KA} is locally well-posed in $H^s$ for any $s\ge0$. Namely, there exist $\delta>0$ and a unique solution $u\in C([0,\delta];H^s(\mathbb T))\cap Y_\delta^s$ with
	\begin{equation}\label{bd}
	\|u\|_{X_\delta^{s,\frac12}}\le\|u\|_{Y_\delta^s}\lesssim\|g\|_{H^s}.
	\end{equation}
\end{thm}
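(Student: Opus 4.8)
The plan is to run a standard Bourgain-space contraction-mapping argument for the Duhamel formulation of \eqref{KA}. Writing the integral equation $u(t)=\psi(t)e^{Lt}g-\psi(t)\int_0^t e^{L(t-t')}\big(u\partial_x u\big)(t')\,dt'$ with a suitable smooth time cutoff $\psi$ supported near $[0,\delta]$, I would define the map $\Phi$ by the right-hand side and show it is a contraction on a ball in $Y_\delta^s$. The two ingredients required are the usual linear estimates — $\|\psi(t)e^{Lt}g\|_{Y^s}\lesssim\|g\|_{H^s}$ and the Duhamel estimate $\|\psi(t)\int_0^t e^{L(t-t')}F(t')dt'\|_{Y^s_\delta}\lesssim \|F\|_{Z^s_\delta}$, both of which are classical for equations of this type and follow from the definitions of $Y^s$ and $Z^s$ (see \cite{B,GTV,CKSTT1}) — together with the crucial \emph{bilinear estimate}
\begin{equation*}
\big\|\partial_x(uv)\big\|_{Z^s_\delta}\lesssim \delta^{\theta}\,\|u\|_{Y^s_\delta}\|v\|_{Y^s_\delta}
\end{equation*}
for some $\theta>0$ and all $s\ge 0$, where the small power of $\delta$ provides the contraction factor for $\delta$ small. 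Once these are in place, the fixed point gives existence; uniqueness in the space $C([0,\delta];H^s)\cap Y^s_\delta$ and continuous dependence on $g$ follow from the same bilinear estimate applied to differences, and \eqref{bd} is read off from the linear estimate applied to the fixed point together with $\|u\|_{X^{s,1/2}_\delta}\le\|u\|_{Y^s_\delta}$.

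The main work, and the main obstacle, is the bilinear estimate. After reducing (by duality and dyadic decomposition) to a weighted convolution estimate on the Fourier side, the key is the algebraic \emph{resonance identity}: if $k=k_1+k_2$ then
\begin{equation*}
(k^5-\alpha k^3)-(k_1^5-\alpha k_1^3)-(k_2^5-\alpha k_2^3)=-5kk_1k_2\Big(k^2-kk_1+k_1^2-\tfrac{\alpha}{?}\Big)
\end{equation*}
— more precisely one computes $k^5-k_1^5-k_2^5=5kk_1k_2(k_1^2+k_1k_2+k_2^2)$ and $k^3-k_1^3-k_2^3=3kk_1k_2$, so the modulation function factors as $kk_1k_2$ times a polynomial bounded below by $c\,\max(|k|,|k_1|,|k_2|)^2$ in the non-degenerate regime. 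This shows that at least one of the three modulations $\langle\tau+k^5-\alpha k^3\rangle$, $\langle\tau_1+\cdots\rangle$, $\langle\tau_2+\cdots\rangle$ is $\gtrsim |k_{\max}|^2|kk_1k_2||k_{\max}|^{-2}\sim |kk_1k_2|$, which is exactly the gain needed: the derivative $\partial_x$ costs $|k|$, and a fifth-order dispersion relation gives a strong enough smoothing (gain of roughly $|k|^4$ worth of modulation, distributed among the factors) to absorb it with room to spare. I would handle the estimate by the standard case analysis according to which modulation is largest and the relative sizes of $|k_1|,|k_2|$, using the $L^1_\tau$-piece of the $Z^s$- and $Y^s$-norms to deal with the borderline $b=\pm 1/2$ Besov-type losses (this is precisely why the $Y^s,Z^s$ spaces are used instead of $X^{s,\pm1/2}$ alone); since we only need $s\ge 0$ these cases are comfortably subcritical and no low-frequency obstruction arises (the mean-zero reduction of Section \ref{sec1} removes any $k=0$ issue, at the cost of the harmless first-order term in $L$).

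I expect the contraction step, uniqueness, and continuous dependence to be routine once the bilinear estimate is established, so the proof will concentrate essentially all of its effort there; I would also remark that the $\delta^\theta$ gain can be obtained either by inserting a sharp time-cutoff and using the standard $\|\psi_\delta u\|_{X^{s,b'}}\lesssim \delta^{b-b'}\|u\|_{X^{s,b}}$ lemma for $-1/2<b'<b<1/2$, or by a slight lowering of the modulation exponent inside the multilinear estimate. Either way, the structural point is that the fifth-order dispersion of \eqref{KA} is \emph{more} smoothing than that of KdV, so the $H^s$, $s\ge 0$, theory is soft; the subtlety is purely in packaging the endpoint regularity via the $Y^s/Z^s$ norms so that the Duhamel term actually lands in $C_tH^s_x$, which is what Proposition \ref{thm1} and hence Theorems \ref{Ta}–\ref{Fr} ultimately rely on.
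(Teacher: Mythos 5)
Your proposal is correct and takes essentially the same route as the paper: a contraction argument in $Y_\delta^s$ built on the linear estimates of Lemma \ref{lem2} and the bilinear estimate $\|\partial_x(u_1u_2)\|_{Z_\delta^s}\lesssim\|u_1\|_{X_\delta^{s,3/10}}\|u_2\|_{X_\delta^{s,3/10}}$, with the $\delta^{2/5-}$ contraction factor extracted (as you suggest) by lowering the modulation exponent from $1/2$ to $3/10$. The only difference is presentational: the paper cites both lemmas from \cite{ASS} rather than proving them, whereas you sketch the bilinear estimate via the resonance factorization $(k^5-\alpha k^3)-(k_1^5-\alpha k_1^3)-(k_2^5-\alpha k_2^3)=kk_1k_2\bigl(5(k_1^2+k_1k_2+k_2^2)-3\alpha\bigr)$, which is indeed the mechanism underlying the cited result.
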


\begin{proof}[Proof of Theorem \ref{thm2}]
We may assume that the initial data $g(x)$ has the mean-zero property, as discussed in Section \ref{sec1}. 
According to Duhamel's principle, our goal is to identify a unique fixed point of the map $\Phi:Y_\delta^s\to Y_\delta^s$ defined by
\begin{equation} \label{sol}
	\Phi(u)=e^{-(\partial_x^5+\alpha\partial_x^3)t}g-\frac{1}{2}\int_0^te^{-(\partial_x^5+\alpha\partial_x^3)(t-t')}\partial_x(u^2)dt'.
\end{equation}
We then plan to utilized the subsequent lemmas to demonstrate that $\Phi$ acts as a contraction map on
$$\mathcal S:=\left\{u\in Y_{\delta}^s:\|u\|_{Y_{\delta}^s}\le M\right\}.$$
\begin{lem}[\cite{ASS}]\label{lem2}
Let $s\in\mathbb R$. For any $\delta<1$, the following estimates hold:
	\begin{align}
		\label{w1}&\|e^{-(\partial_x^5+\alpha\partial_x^3)t}g\|_{Y_\delta^s}\lesssim\|g\|_{H^s},\\
		\label{w2}&\bigg\|\int_0^te^{-(\partial_x^5+\alpha\partial_x^3)(t-t')}F(\cdot,t')dt'\bigg\|_{Y_\delta^s}\lesssim\|F\|_{Z_\delta^s}.
	\end{align}
\end{lem}
\begin{lem}[\cite{ASS}]\label{lem}
Let $s\ge0$ and $-1/2<b<b'<1/2$.
For any $\delta<1$, the following estimates hold:
\begin{align*}
&\|u\|_{X_\delta^{s,b}}\lesssim_{b,b'}\delta^{b'-b}\|u\|_{X_\delta^{s,b'}},\\
&\|u\|_{Z_\delta^s} \lesssim\delta^{1-}\|u\|_{Y_\delta^s},\\
&\|\partial_x(u_1u_2)\|_{Z_\delta^s}\lesssim_{\alpha,s}\|u_1\|_{X_\delta^{s,\frac3{10}}}\|u_2\|_{X_\delta^{s,\frac3{10}}}
\end{align*}
where $u_1,u_2$ are mean-zero functions.
\end{lem}

We first show that $\Phi(u)\in Y_\delta^s$ for $u\in Y_\delta^s$.
For this, we apply \eqref{w1} and \eqref{w2} to the linear term and the Duhamel term in \eqref{sol} respectively to get
$$\|e^{-(\partial_x^5+\alpha\partial_x^3)t}g\|_{Y_\delta^s} \le C\|g\|_{H^s},$$
\begin{equation}\label{duh}
\left\| \int_0^te^{-(\partial_x^5+\alpha\partial_x^3)(t-t')}F(u)(\cdot,t')dt'\right\|_{Y_\delta^s}
\le C \|\partial_x(u^2)\|_{Z_\delta^s}.
\end{equation}
Using Lemma \ref{lem}, we bound the right-hand side of \eqref{duh} as
\begin{equation*}
C\|\partial_x(u^2)\|_{Z_\delta^s}
\le C \|u\|_{{X^{s,\frac3{10}}_\delta}}^2\le C \delta^{\frac{2}{5}-}\|u\|_{{X_\delta^{s,\frac{1}{2}-}}}^2.
\end{equation*}
Since $\|u\|_{X_\delta^{s,\frac{1}{2}-}}\le\|u\|_{X_\delta^{s,\frac12}}\le\|u\|_{Y_\delta^{s}}$,
we therefore get
$$\|\Phi(u)\|_{Y_\delta^s} \le C\|g\|_{H^s}+  C \delta^{\frac{2}{5}-} M^2.$$
If we fix $M=2C\|g\|_{H^s}$ and take $\delta$ sufficiently small such that
\begin{equation}\label{qs}
C\delta^{\frac{2}{5}-}M\le \frac{1}{2},
\end{equation}
then we get $\|\Phi(u)\|_{Y_\delta^s}\le M.$

Next we show that $\Phi$ is a contraction on $\mathcal S$. Again using Lemma \ref{lem}, we see that
\begin{align*}
\|\Phi (u)-\Phi (v)\|_{Y_\delta^s}&\le C \|\partial_x(u^2-v^2)\|_{Z_\delta^s}\\
&\le C\delta^{\frac{2}{5}-}\|u+v\|_{Y_\delta^{s}}\|u-v\|_{Y_\delta^{s}}\\
&\le C \delta^{\frac{2}{5}-}M\|u-v\|_{Y_\delta^s}.
\end{align*}
Hence $\Phi$ is a contraction on $\mathcal S$ since we are
taking $\delta$ so that \eqref{qs} holds.
Therefore, by the contraction mapping principle, there exists a unique solution
$u \in Y_{\delta}^s$ for given initial data $g \in H^s$ with $s\ge0$.
\end{proof}

\section{Smoothing estimates}\label{sec4}
This final section is dedicated to proving Proposition \ref{thm1}.
To do this, we first transcribe the equation \eqref{KA} in terms of the Fourier transform as presented in \eqref{fwdk3} and make the substitution $u_k(t)=w_k(t)e^{-i(k^5-\alpha k^3)t}$.
Next, we employ differentiation by parts to drag the phase down from the oscillation arising in this process.
This approach enables the utilization of decay of the resulting large denominators.
Moreover, while the degree of nonlinearity is elevated from quadratic to cubic,
we split the associated trilinear term into resonant and nonresonant components and employ Bourgain's restricted norm method to control the delicate nonresonant components effectively.

\subsection{Representation of Fourier coefficients of the solution}\label{sub4-1}
The lemma below provides a formulation for the solution to \eqref{KA} in terms of the Fourier transform.
\begin{lem}
The solution $u$ to \eqref{KA} is represented in the following form:
\begin{align}\label{morphback}
\nonumber
u_k(t)&=e^{-i(k^5-\alpha k^3)t}g_k+\mathcal B(u,u)_k(t)-e^{-i(k^5-\alpha k^3)t}\mathcal B(u,u)_k(0)\\
\nonumber
&\qquad+\int_0^te^{-i(k^5-\alpha k^3)(t-r)}\rho(u)_k
dr+\int_0^te^{-i(k^5-\alpha k^3)(t-r)} \sigma(u)_k
dr\\
&\qquad\qquad+\int_0^te^{-i(k^5-\alpha k^3)(t-r)}\mathcal R(u)_k(r)dr,
\end{align}
where for $k\ne0$
\begin{align*}
&\mathcal B(\phi,\psi)_k=-\frac12\sum_{k_1+k_2=k}\frac{\phi_{k_1}\psi_{k_2}}{k_1k_2\{5(k_1^2 +k_2^2 + k_1k_2)-3\alpha\}},\\ &\rho(u)_k=\frac{i|u_k|^2u_k}{2(15 k^2-3\alpha)k}, \quad \sigma(u)_k=-iu_k\sum_{|j|\ne|k|}\frac{|u_j|^2}{j\{5(k^2-kj+j^2)-3\alpha\}},\\
&\mathcal{R}(u)_k=-\frac i2\sum_{\substack{k_1+k_2+k_3=k\\(k_1+k_2)(k_2+k_3)(k_3+k_1)\ne0}}\frac{u_{k_1} u_{k_2} u_{k_3}} {k_1\{5(k_1^2 +(k_2+k_3)^2 + k_1(k_2+k_3))-3\alpha\}},
\end{align*}
and $B(\phi,\psi)_0=\rho(u)_0=\sigma(u)_0=R(u)_0=0$.
\end{lem}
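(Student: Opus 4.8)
The plan is to start from the Fourier-side formulation of \eqref{KA}, namely
\begin{equation*}
\partial_t u_k = i(k^5-\alpha k^3)u_k \cdot(\text{sign fix}) - \frac{ik}{2}\sum_{k_1+k_2=k}u_{k_1}u_{k_2},
\end{equation*}
which we abbreviate as $\partial_t u_k + i(k^5-\alpha k^3)u_k = -\tfrac{ik}{2}\sum_{k_1+k_2=k}u_{k_1}u_{k_2}$; this is the identity I will refer to as \eqref{fwdk3}. First I would pass to the interaction representation $w_k(t)=u_k(t)e^{i(k^5-\alpha k^3)t}$, so that $\partial_t w_k = -\tfrac{ik}{2}e^{i(k^5-\alpha k^3)t}\sum_{k_1+k_2=k}u_{k_1}u_{k_2}$, and the nonlinear term carries the oscillatory factor $e^{it\,\Omega}$ with resonance function $\Omega(k,k_1,k_2)=(k^5-\alpha k^3)-(k_1^5-\alpha k_1^3)-(k_2^5-\alpha k_2^3)$. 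The classical factorization identity for fifth powers on $k_1+k_2=k$ gives
\begin{equation*}
k^5-k_1^5-k_2^5 = 5k_1k_2(k_1+k_2)(k_1^2+k_1k_2+k_2^2),\qquad k^3-k_1^3-k_2^3=3k_1k_2(k_1+k_2),
\end{equation*}
so $\Omega = k_1k_2 k\,\{5(k_1^2+k_1k_2+k_2^2)-3\alpha\}$, which is the exact quantity appearing in the denominator of $\mathcal B$. This justifies the normal-form (differentiation-by-parts) step: write $e^{it\Omega}=\tfrac{1}{i\Omega}\partial_t(e^{it\Omega})$ and integrate by parts in $t$ inside the Duhamel integral.

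Next I would carry out the integration by parts carefully. The boundary terms at $r=t$ and $r=0$ produce exactly the terms $\mathcal B(u,u)_k(t)$ and $-e^{-i(k^5-\alpha k^3)t}\mathcal B(u,u)_k(0)$ in \eqref{morphback}, once one undoes the interaction representation; here the factor $1/(i\Omega)$ supplies the denominator $k_1k_2k\{5(k_1^2+k_1k_2+k_2^2)-3\alpha\}$, and dividing out the external $k$ (which was in the nonlinearity $-\tfrac{ik}{2}$) leaves precisely $-\tfrac12/(k_1k_2\{\cdots\})$ as in the definition of $\mathcal B(\phi,\psi)_k$. The interior term is $-\int_0^t e^{\cdots}\tfrac{1}{i\Omega}\partial_r\big(\text{quadratic in }u\big)\,dr$; substituting $\partial_r u_{k_1}$ and $\partial_r u_{k_2}$ from \eqref{fwdk3} turns the quadratic expression into a cubic one (plus the linear part of $\partial_r u$, which cancels against the oscillation — this is the point of using $w_k$ rather than $u_k$ throughout). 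After relabeling and symmetrizing the summation, I would collect the cubic output into the trilinear term with the constraint $(k_1+k_2)(k_2+k_3)(k_3+k_1)\ne 0$, which is $\mathcal R(u)_k$.

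The remaining work is to isolate the \emph{resonant} contributions, i.e.\ the excluded cases $k_1+k_2=0$, $k_2+k_3=0$, or $k_3+k_1=0$, from the cubic sum; these are where the denominator $\Omega$ vanished and the differentiation-by-parts cannot be applied directly, so they must be treated separately before the integration by parts. When two frequencies cancel, say $k_2=-k_3$, the surviving frequency forces $k_1=k$, and after using that $u$ is real-valued ($u_{-j}=\overline{u_j}$) the triple product collapses to $|u_j|^2 u_k$. Splitting further according to whether $|j|=|k|$ or $|j|\ne|k|$ (the former giving the fully resonant $\rho(u)_k$ with denominator $2(15k^2-3\alpha)k$, the latter the term $\sigma(u)_k$ with denominator $j\{5(k^2-kj+j^2)-3\alpha\}$), and keeping track of the combinatorial multiplicity of the three ways two of $k_1,k_2,k_3$ can cancel, yields exactly the two extra Duhamel integrals in \eqref{morphback}. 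Finally one checks that for $k=0$ the external factor $k$ in the nonlinearity kills everything, so $\mathcal B(\phi,\psi)_0=\rho(u)_0=\sigma(u)_0=\mathcal R(u)_0=0$, and one records that all manipulations are legitimate for solutions in the class $Y_\delta^s$ from Theorem \ref{thm2}, where the sums converge absolutely on $t\in[0,\delta]$.

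I expect the main obstacle to be purely bookkeeping rather than conceptual: namely, tracking the constant $-\tfrac12$ and the precise denominators through the symmetrization of the cubic sum, and making sure the resonant set $\{(k_1+k_2)(k_2+k_3)(k_3+k_1)=0\}$ is partitioned without double-counting into the $\rho$ part ($|j|=|k|$), the $\sigma$ part ($|j|\ne|k|$), and confirming no genuinely new term survives. A secondary technical point is justifying the interchange of summation and the $t$-integration (and the differentiation by parts at the level of each Fourier mode) for $u\in C_t^0 H_x^s$ with $s>-1/2$; this is handled by first proving the identity for smooth solutions and then passing to the limit using the well-posedness and continuous dependence from Theorem \ref{thm2} together with the absolute convergence of the multilinear sums on the relevant frequency-localized pieces.
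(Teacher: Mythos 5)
Your plan follows essentially the same route as the paper's proof: pass to the interaction picture $w_k=u_k e^{i(k^5-\alpha k^3)t}$, factor the resonance function $\Omega=k_1k_2k\{5(k_1^2+k_1k_2+k_2^2)-3\alpha\}$, perform a normal-form (differentiation-by-parts) reduction to peel off the bilinear boundary terms $\mathcal B$, substitute the equation once more to produce the cubic sum, and partition that sum into resonant ($\rho$, $\sigma$) and nonresonant ($\mathcal R$) pieces before integrating and reverting to $u$. The only cosmetic difference is that the paper performs the differentiation-by-parts directly on the ODE $\partial_t w_k=\cdots$ rather than inside a Duhamel integral, and it exploits the mean-zero property so that $k_2+k_3\ne0$ is automatic (making the cubic sum asymmetric in $k_1$ versus $k_2,k_3$), whereas your bookkeeping example with $k_2=-k_3$ implicitly assumes a symmetrized form -- a detail you flag yourself as the main thing to track.
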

\begin{proof}
Applying the Fourier transform to the equation \eqref{KA} results in
\begin{equation}\label{fwdk3}
\begin{cases}
\partial_tu_k=-i k^5u_k+ i\alpha k^3u_k-\frac{ik}2\sum_{k_1+k_2=k}u_{k_1}u_{k_2}, \\
u_k(0) = g_k.
\end{cases}
\end{equation}
Given the mean-zero condition of $u$ (refer to Section \ref{sec1}), there are no zero harmonics in this equation.
Using the substitution
$u_k(t)=w_k(t)e^{-i(k^5-\alpha k^3)t}$
and employing the identities
\begin{align*}
	k_1^3+k_2^3&=(k_1+k_2)^3-3k_1k_2(k_1+k_2),\\
	k_1^5+k_2^5&=(k_1+k_2)^5-5k_1k_2(k_1+k_2)(k_1^2+k_2^2+k_1k_2),
\end{align*}
we can reformulate the equation \eqref{fwdk3} as
\begin{equation}\label{morph}
	\partial_tw_k=-\frac{ik}2\sum_{k_1+k_2=k}e^{i\{5(k_1^2 + k_2^2 + k_1 k_2 )-3\alpha\} k_1k_2kt}w_{k_1}w_{k_2}.
\end{equation}

Using differentiation by parts with $e^{at}= \partial_t (\frac1a e^{at})$, we can reformulate \eqref{morph} as
\begin{align}\label{morph1}
\nonumber
\partial_tw_k
	&=-\partial_t\Big(\sum_{k_1+k_2=k}\frac{e^{ik_1k_2k\{5(k_1^2 + k_2^2 + k_1 k_2 )-3\alpha\}t}w_{k_1}w_{k_2}}{2k_1k_2\{5(k_1^2 + k_2^2 + k_1 k_2 )-3\alpha\}}\Big)\\
	&\qquad\qquad\qquad\qquad\quad+\sum_{k_1+k_2=k}\frac{e^{ik_1k_2k\{5(k_1^2 + k_2^2 + k_1 k_2 )-3\alpha\}t}w_{k_1}\partial_t w_{k_2}}{k_1k_2\{5(k_1^2 + k_2^2 + k_1 k_2 )-3\alpha\}}.
\end{align}
Since $w_0=0$, neither $k_1$ nor $k_2$ in the sums above are zero.
We now introduce $\widetilde{\mathcal{B}}$ defined as 
$$ \widetilde{\mathcal B}(\phi,\psi)_k=-\frac12\sum_{k_1+k_2=k}\frac{e^{ik_1k_2k\{5(k_1^2 + k_2^2 + k_1 k_2)-3\alpha\}t}\phi_{k_1}\psi_{k_2}}{k_1k_2\{5(k_1^2 + k_2^2 + k_1 k_2 )-3\alpha\}}.$$
Then we reformulate the equation \eqref{morph1} as follows:
\begin{equation}\label{morphing1}
\begin{aligned}
\partial_t(w-\widetilde{\mathcal{B}}(w,w))_k=\sum_{k_1+k_2=k}\frac{e^{ik_1k_2k\{5(k_1^2 + k_2^2 + k_1 k_2 )-3\alpha\}t}w_{k_1}\partial_t w_{k_2}}{k_1k_2\{5(k_1^2 + k_2^2 + k_1 k_2 )-3\alpha\}}.
\end{aligned}
\end{equation}

Inserting \eqref{morph} into the sum within \eqref{morphing1}, we observe
\begin{align}
\nonumber
&\sum_{k_1+k_2=k}\frac{e^{ik_1k_2k\{5(k_1^2 + k_2^2 + k_1 k_2 )-3\alpha\}t}w_{k_1}\partial_t w_{k_2}}{k_1k_2\{5(k_1^2 + k_2^2 + k_1 k_2 )-3\alpha\}}\\
\label{morphing2}
&\qquad\quad=\sum_{\substack{k_1+k_2+k_3=k\\ k_2+k_3\ne0}}\frac{e^{it\theta(k_1,k_2,k_3)(k_1+k_2)(k_2+k_3)(k_3+k_1)}}{2k_1\{5(k_1^2 + (k_2+k_3)^2 + k_1 (k_2+k_3) )-3\alpha\}}w_{k_1}w_{k_2} w_{k_3},
\end{align}
where ${\theta} (k_1,k_2,k_3)=5(k_1^2 +k_2^2+k_3^2+k_1k_2+k_2k_3+k_3k_1)-3\alpha$.
By merging \eqref{morphing1} with \eqref{morphing2}, we derive
\begin{align}
\nonumber
&\partial_t(w-\widetilde{\mathcal{B}}(w,w))_k\\
\label{morphing3}
&\qquad =-\frac i2\sum_{\substack{k_1+k_2+k_3=k\\ k_2+k_3\ne0}}\frac{e^{it \theta(k_1,k_2,k_3)(k_1+k_2)(k_2+k_3)(k_3+k_1)}}{k_1\{5(k_1^2 + (k_2+k_3)^2 + k_1 (k_2+k_3) )-3\alpha\}}w_{k_1}w_{k_2}w_{k_3}.
\end{align}
Given that $\theta (k_1, k_2, k_3 )\ne 0$, the zero-phase sets are individually the disjoint unions of:
\begin{align*}
	S_1 &= \{k_1+k_2=0, k_3 + k_1=0, k_2 +k_3 \ne 0\}
	=\{k_1=-k, k_2 = k, k_3 = k \}, \\
	S_2&= \{k_1 +k_2 = 0,k_3+k_1 \ne 0,k_2+k_3 \ne 0\}
	= \{k_1=j, k_2 = -j, k_3 = k, |j|\ne|k| \},\\
	S_3 &= \{k_3+k_1=0, k_1 + k_2 \ne 0, k_2 + k_3 \ne 0\}
	= \{k_1=j, k_2 = k, k_3 = -j, |j|\ne|k| \}.
\end{align*}
We define $\widetilde {\mathcal R}$ by
$$\widetilde {\mathcal R}(u)_k=-\frac i2\sum_{\substack{k_1+k_2+k_3=k\\(k_1+k_2)(k_2+k_3)(k_3+k_1)\ne0}}\frac{e^{it \theta(k_1,k_2,k_3)(k_1+k_2)(k_2+k_3)(k_3+k_1)}u_{k_1}u_{k_2}u_{k_3}} {k_1\{5(k_1^2 + (k_2+k_3)^2 + k_1 (k_2+k_3) )-3\alpha\}}.
$$
Thus, \eqref{morphing3} is reformulated as
\begin{align}
\nonumber
&\partial_t(w-\widetilde {\mathcal B}(w,w))_k\\
\nonumber
&\qquad\quad=-\frac{i}{2} \sum_{l=1}^3 \sum_{S_l} \frac{w_{k_1}w_{k_2}w_{k_3} }{k_1\{5(k_1^2 + (k_2+k_3)^2 + k_1 (k_2+k_3) )-3\alpha\}}+\widetilde {\mathcal R}(w)_k\\
\label{morphgoal}
&\qquad\quad=\frac{i|w_k|^2w_k}{2(15 k^2-3\alpha)k}-iw_k\!\sum_{|j|\ne|k|}\!\frac{|w_j|^2}{j\{5(k^2-kj+j^2)-3\alpha\}}+\widetilde {\mathcal R}(w)_k.
\end{align}
Upon integrating \eqref{morphgoal} from 0 to $t$, it follows
\begin{align*}
w_k(t)-w_k(0)&=\widetilde{\mathcal B}(w,w)_k(t)- \widetilde{\mathcal B}(w,w)_k(0)\\
&\quad +\int_0^t\rho(w)_k(r)dr+\int_0^t\sigma(w)_k(r)dr+\int_0^t\widetilde{\mathcal R}(w)_k(r)dr.
\end{align*}
Converting back to the $u$ variable, we conclude
\begin{align*}
u_k(t)-e^{-i(k^5-\alpha k^3)t}g_k&=\mathcal B(u,u)_k(t)-e^{-i(k^5-\alpha k^3)t}\mathcal B(u,u)_k(0)\\
&\qquad+\int_0^te^{-i(k^5-\alpha k^3)(t-r)}\big(\rho(u)_k(r)+\sigma(u)_k(r)\big) dr\\
&\qquad\qquad+\int_0^te^{-i(k^5-\alpha k^3)(t-r)}\mathcal R(u)_k(r)dr
\end{align*}
as desired.
\end{proof}

\subsection{Estimates for $\mathcal{B}, \rho, \sigma$ and $\mathcal{R}$}
We now derive the following multilinear estimates, Proposition \ref{goodlem}, which will be employed to complete the proof of the smoothing estimates (Proposition \ref{thm1}):
\begin{prop}\label{goodlem}
Let $s>-1/2$. For $s_1\le s+3$, we have
\begin{equation}\label{prop1}
	\|\mathcal B(\phi,\psi)\|_{H^{s_1}}\lesssim_{\alpha}\|\phi\|_{H^s}\|\psi\|_{H^s},
\end{equation}
and for $s_1\le s+2$, we have
\begin{equation}\label{prop2}
\|\rho(u)\|_{H^{s_1}}\lesssim_{\alpha}\|u\|_{H^s}^3, \quad
\|\sigma(u)\|_{H^{s_1}}\lesssim_{\alpha}\|u\|_{H^s}^3.
\end{equation}
For $0\leq s_1 <\min(s+2,3s+2)$ we have
\begin{equation}\label{prop3}
	\|\mathcal{R}(u)\|_{X_{\delta}^{s_1, -1/2+\epsilon}} \lesssim_{\alpha,\epsilon}\|u\|^3_{X_{\delta}^{s,1/2}}.
\end{equation}
\end{prop}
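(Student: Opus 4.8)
The plan is to establish \eqref{prop1}--\eqref{prop3} in turn; \eqref{prop1} and \eqref{prop2} are purely algebraic, while \eqref{prop3} is the heart of the matter. For the first two I would begin with the pointwise lower bounds on the multipliers: since $2(k_1^2+k_2^2+k_1k_2)=k_1^2+k_2^2+(k_1+k_2)^2$ and $\alpha\in\{-1,0,1\}$, for nonzero $k_1,k_2$ one has $|5(k_1^2+k_2^2+k_1k_2)-3\alpha|\sim\max(|k_1|,|k_2|)^2$, and likewise $|5(k^2-kj+j^2)-3\alpha|\sim\max(|k|,|j|)^2\ge k^2$ and $|15k^2-3\alpha|\sim k^2$. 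Thus $|\mathcal B(\phi,\psi)_k|\lesssim\sum_{k_1+k_2=k}|\phi_{k_1}||\psi_{k_2}|\bigl(|k_1||k_2|\max(|k_1|,|k_2|)^2\bigr)^{-1}$, and on $|k_1|\ge|k_2|$ one has $\langle k\rangle^{s_1}(|k_1|^{3}|k_2|)^{-1}\lesssim\langle k_1\rangle^{s}|k_2|^{-1}$ exactly when $s_1\le s+3$, so Young's inequality $\ell^2\ast\ell^1\subset\ell^2$ together with Cauchy--Schwarz gives $\|\mathcal B(\phi,\psi)\|_{H^{s_1}}\lesssim\|\phi\|_{H^s}\sum_{k\ne0}|\psi_k|/|k|\lesssim\|\phi\|_{H^s}\|\psi\|_{H^s}$, the last step using $\sum_{k\ne0}|k|^{-2}\langle k\rangle^{-2s}<\infty$, which is exactly the role of the hypothesis $s>-1/2$. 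For $\rho$, from $|\rho(u)_k|\lesssim|k|^{-3}|u_k|^3$ and $|u_k|\le\langle k\rangle^{-s}\|u\|_{H^s}$ one gets $\|\rho(u)\|_{H^{s_1}}\lesssim\|u\|_{H^s}^3$ whenever $s_1\le 3s+3$ (hence whenever $s_1\le s+2$ and $s>-1/2$); and from $|\sigma(u)_k|\lesssim\langle k\rangle^{-2}|u_k|\sum_{j\ne0}|u_j|^2/|j|\lesssim\langle k\rangle^{-2}|u_k|\|u\|_{H^s}^2$ one gets $\|\sigma(u)\|_{H^{s_1}}\lesssim\|u\|_{H^s}^3$ precisely for $s_1\le s+2$ --- the sharp case, and one source of the restriction in \eqref{prop3}.

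For \eqref{prop3} I would extend $u$ from $[0,\delta]$ to $\mathbb R$ with comparable $X^{s,1/2}$ norm (routine), pass to the space--time Fourier side and dualize against $h$ with $\|h\|_{X^{-s_1,1/2-\epsilon}}\le1$. Substituting $k_2+k_3=k-k_1$ in the multiplier, the object to bound becomes
\begin{equation*}
T:=\sum_{\substack{k_1+k_2+k_3=k\\(k_1+k_2)(k_2+k_3)(k_3+k_1)\ne0}}\frac{1}{k_1\{5(k_1^2-kk_1+k^2)-3\alpha\}}\int_{\mathbb R}\bigl(u_{k_1}u_{k_2}u_{k_3}\,\overline{h_k}\bigr)(t)\,dt .
\end{equation*}
For a single Fourier mode the Sobolev embedding $H^{1/4}(\mathbb R_t)\hookrightarrow L^4(\mathbb R_t)$ gives $\|u_{k_i}\|_{L^4_t}\lesssim\|P_{k_i}u\|_{X^{0,1/2}}=:a_{k_i}$ and $\|h_k\|_{L^4_t}\lesssim\|P_kh\|_{X^{0,1/2-\epsilon}}=:b_k$, so Hölder in $t$ reduces \eqref{prop3} to the discrete inequality
\begin{equation*}
\sum_{k_1+k_2+k_3=k}\frac{a_{k_1}a_{k_2}a_{k_3}\,b_k}{|k_1|\max(\langle k_1\rangle,\langle k\rangle)^2}\ \lesssim\ \Bigl(\sum_k\langle k\rangle^{2s}a_k^2\Bigr)^{3/2}\Bigl(\sum_k\langle k\rangle^{-2s_1}b_k^2\Bigr)^{1/2},
\end{equation*}
using $|5(k_1^2-kk_1+k^2)-3\alpha|\sim\max(\langle k_1\rangle,\langle k\rangle)^2$. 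In the single configuration where this algebraic weight is insufficient I would instead retain the modulations $\sigma=\tau+k^5-\alpha k^3$, $\sigma_i=\tau_i+k_i^5-\alpha k_i^3$ and use the identity $\sigma-\sigma_1-\sigma_2-\sigma_3=(k_1+k_2)(k_2+k_3)(k_3+k_1)\,\theta(k_1,k_2,k_3)$ together with $|\theta(k_1,k_2,k_3)|\sim\max_i|k_i|^2$ (which follows from $2\theta+6\alpha=5\sum_{i<j}(k_i+k_j)^2$ and $\max_i|k_i|\le\tfrac32\max(|k_1+k_2|,|k_2+k_3|,|k_3+k_1|)$): on the nonresonant set one of the four modulations is $\gtrsim R:=|(k_1+k_2)(k_2+k_3)(k_3+k_1)|\max_i|k_i|^2$, and placing the corresponding mode in $L^4_t$ via $\langle\sigma_\ast\rangle^{1/4}\le\langle\sigma_\ast\rangle^{1/2}R^{-1/4}$ extracts a factor $R^{-1/4}$ while keeping the half-derivative the embedding needs.

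The remaining work is a dyadic case check of the discrete inequality. In the high$\times$low$\times$low$\to$high regime ($\langle k_1\rangle,\langle k_3\rangle\sim1$, $\langle k_2\rangle\sim\langle k\rangle\sim N$), the weight is only $\sim N^{-2}$, and after summing the low modes and applying Cauchy--Schwarz in $N$ one needs $\langle N\rangle^{s_1}\langle N\rangle^{-s}\langle N\rangle^{-2}\lesssim1$, i.e.\ $s_1\le s+2$. In the high$\times$high$\times$high$\to$high regime ($\langle k_1\rangle\sim\langle k_2\rangle\sim\langle k_3\rangle\sim\langle k\rangle\sim N$), the weight is $\sim N^{-3}$, but Young's inequality $\ell^1\ast\ell^1\ast\ell^2\subset\ell^2$ with $\|\cdot\|_{\ell^1(|k|\sim N)}\lesssim N^{1/2}\|\cdot\|_{\ell^2(|k|\sim N)}$ applied twice yields a factor $N$, and after distributing the three $\langle k_i\rangle^{-s}$ one needs $\langle N\rangle^{s_1}\langle N\rangle^{-3s}\langle N\rangle^{-3}\langle N\rangle\lesssim1$, i.e.\ $s_1\le3s+2$ (no resonance is needed here). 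Finally, in the regime $\langle k\rangle,\langle k_1\rangle\sim1$ with $k_2\approx-k_3$, $\langle k_2\rangle\sim\langle k_3\rangle\sim N$, the weight is $O(1)$ and Cauchy--Schwarz leaves $\sum_N\langle N\rangle^{-2s}A_N^2$, which converges only for $s>0$; here $R\gtrsim N^4$, so the resonance extraction supplies a gain $N^{-1+}$, turning the sum into $\sum_N\langle N\rangle^{-1-2s+}A_N^2<\infty$ --- which is exactly where $s>-1/2$ enters. All other configurations impose only weaker restrictions. The main obstacle is precisely this bookkeeping: checking, uniformly in $\alpha\in\{-1,0,1\}$, that every configuration obeys $s_1<\min(s+2,3s+2)$ and $s>-1/2$, and that the resonance extraction can be carried out regardless of which of the four modulations is the dominant one.
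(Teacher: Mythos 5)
Your arguments for \eqref{prop1} and \eqref{prop2} are essentially the same as the paper's: the same pointwise lower bounds $|5(k_1^2+k_2^2+k_1k_2)-3\alpha|\gtrsim\max(|k_1|,|k_2|)^2$, $|5(k^2-kj+j^2)-3\alpha|\gtrsim k^2$, $|15k^2-3\alpha|\sim k^2$, followed by Young and Cauchy--Schwarz; these are correct.

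For \eqref{prop3} you take a genuinely different route. The paper dualizes, extracts the full modulation gain $\langle\sigma_n\rangle^{1/2-}\gtrsim(|k_1+k_2||k_2+k_3||k_3+k_1|)^{5/6-}$ from the maximal modulation, folds it into a purely algebraic bound (\eqref{j}, \eqref{k}) giving an extra $|k_1k_2k_3k_4|^{-\epsilon}$, and then closes the estimate in one stroke with the $L^6_{t,x}$ Strichartz estimate from \cite{ASS} (Lemma~\ref{l6}). You instead propose to place each Fourier mode in $L^4_t$ via the one-dimensional embedding $H^{1/4}(\mathbb R_t)\hookrightarrow L^4(\mathbb R_t)$, reduce to a frequency-only inequality, and then do a dyadic case analysis, reserving the resonance (large-modulation) gain for exceptional cases. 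This avoids the Strichartz input but shifts the burden entirely to the combinatorics, and that is where the proposal breaks down.

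Concretely, the case check is incomplete, and the claim that only the single configuration $\langle k\rangle,\langle k_1\rangle\sim1$, $\langle k_2\rangle\sim\langle k_3\rangle\sim N$ requires the resonance gain is false. Consider $|k_1|\sim1$, $|k_2|\sim|k_3|\sim|k|\sim N$ with $|k_2+k_3|\sim N$ (so non-resonant). The multiplier is $(|k_1|\,|5(k_1^2-kk_1+k^2)-3\alpha|)^{-1}\sim N^{-2}$; the distinguished factor $|k_1|^{-1}$ gives no gain because $k_1$ is the one \emph{small} frequency. Cauchy--Schwarz in the free high frequency and $\ell^1$--$\ell^2$ on the remaining sums give, for the dyadic block,
\begin{equation*}
\frac{1}{N^2}\,\|a\|_{\ell^1(\sim1)}\,\|a\|_{\ell^2(\sim N)}^2\,\|b\|_{\ell^1(\sim N)}
\lesssim \|u\|_{X^{s,1/2}}\,N^{\,s_1-2s-3/2}\,A_N^2\,B_N,
\end{equation*}
and no rearrangement of Young/Cauchy--Schwarz improves the exponent. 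Summing in $N$ thus needs $s_1<2s+\tfrac32$, which for $s\in(-\tfrac12,0)$ is \emph{strictly} more restrictive than the claimed $s_1<3s+2$ (since $3s+2-(2s+\tfrac32)=s+\tfrac12>0$). So this regime does require the resonance extraction you reserved for another case; once one notices $R\sim N^5$ here and applies the same $R^{-1/4}$ device, the exponent improves to $s_1<2s+\tfrac{11}{4}$ and the range is recovered, but the analysis as written does not do this. Similar issues arise for $|k_1|\sim N_1\ll N_2\sim|k_2|\sim|k_3|$ with $|k|\sim N_1$. The paper's route avoids this bookkeeping entirely because the algebraic reduction \eqref{k} already absorbs the full factor $(|k_1+k_2||k_2+k_3||k_3+k_1|)^{3/2-}$, which in the configuration above contributes $N^{-9/2}$ rather than your $N^{-2}$, and the $L^6$ Strichartz then handles all frequency configurations uniformly. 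Your strategy is salvageable, but the assertion that ``all other configurations impose only weaker restrictions'' is not correct, and a complete proof along these lines would need to apply the resonance gain in every configuration in which the distinguished low frequency is $k_1$ and at least two of the remaining frequencies are high.
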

\subsubsection{Proof of \eqref{prop1}}
Due to symmetry, we can restrict the sum in $\mathcal B$ to $|k_1|\ge|k_2|$.
Since $s>-1/2$ and $$5(k_1^2+k_2^2+k_1k_2)-3\alpha\sim_{\alpha}\left(k_2 + \frac{k_1}{2}\right)^2 + \frac{3k_1^2}{4}\ge k_1^2\gtrsim k^2,$$
we calculate
\begin{align*}
\|\mathcal B(\phi,\psi)\|_{H^{s_1}}&\le\left\||k|^{s_1}\sum_{\substack{k_1+k_2=k\\|k_1|\ge|k_2|}}
	\frac{|\phi_{k_1}\psi_{k_2}|}{|5(k_1^2+k_2^2+k_1k_2)-3\alpha||k_1k_2|}\right\|_{l_k^2}\\
	&\lesssim_{\alpha}\left\| \sum_{\substack{k_1+k_2=k\\|k_1|\ge|k_2|}}
	\frac{|\phi_{k_1}||k_1|^s|\psi_{k_2}||k_2|^s|k|^{s_1-s-3}}{|k_2|^{s+1}}\right\|_{l_k^2}.
\end{align*}
For $s>-1/2$ and $s_1 \le s+3$, employing Young's inequality and the Cauchy-Schwarz inequality ensures that the right-hand side is bounded as follows:
\begin{align*}
\left\|\sum_{\substack{k_1+k_2=k\\|k_1|\ge|k_2|}}\frac{|\phi_{k_1}||k_1|^s|\psi_{k_2}||k_2|^s|k|^{s_1-s-3}}{|k_2|^{s+1}}\right\|_{l_k^2}
&\le\left\|\sum_{\substack{k_1+k_2=k\\|k_1|\ge|k_2|}}\frac{|\phi_{k_1}||k_1|^s|\psi_{k_2}||k_2|^s}{|k_2|^{s+1}}\right\|_{l_k^2}\\
&\le\||\phi_k||k|^s\|_{l_k^2}\bigg\|\frac{|\psi_k||k|^s}{|k|^{s+1}}\bigg\|_{l_k^1}\\
&\le\|\phi\|_{H^s}\big\||k|^{-s-1}\big\|_{l_k^2}\|\psi\|_{H^s}\lesssim\|\phi\|_{H^s}\|\psi\|_{H^s}
\end{align*}
yielding the required estimate.
\subsubsection{Proof of \eqref{prop2}}
For $s_1 \leq s+2$, employing the inclusion properties of $l^p$-spaces, we derive the first inequality in \eqref{prop2} as follows:
\begin{align*}
\|\rho(u)\|_{H^{s_1}}= \bigg\|\frac{|u_k|^3 |k|^{3s}|k|^{s_1-3s}}{2|15 k^2-3\alpha||k|}\bigg\|_{l_k^2}&\lesssim_{\alpha}\bigg\||u_k|^3 |k|^{3s}|k|^{s_1-3s-3}\bigg\|_{l_k^2}\\
&\leq\||u_k|^3|k|^{3s}\|_{l_k^2}=\||u_k||k|^s\|^3_{l_k^6} \le\|u\|_{H^s}^3.
\end{align*}

Regarding the second inequality in \eqref{prop2}, the left-hand side is bounded as follows:
\begin{align*}
\|\sigma(u)\|_{H^{s_1}}&\leq\bigg\||k|^{s_1-s}|u_k||k|^s\sum_{|j|\neq|k|}\frac{|u_j|^2|j|^{2s}|j|^{-2s}}{|j||5(k^2-kj+j^2)-3\alpha|}\bigg\|_{l_k^2} \\
&\le \sup_{k,j\ne0}\frac{|k|^{s_1-s}}{|j|^{2s+1}|5(k^2-kj+j^2)-3\alpha|}\|u\|_{H^s}\sum_{|j|\neq|k|}|u_j|^2|j|^{2s} \\
&\le \sup_{k,j\ne0}\frac{|k|^{s_1-s}}{|j|^{2s+1}|5(k^2-kj+j^2)-3\alpha|}\|u\|^3_{H^s}.
\end{align*}
We utilized the Cauchy-Schwarz inequality for the last inequality.
For $s\geq-1/2$ and $s_1\le s+2$,
$$\sup_{k,j\ne0}\frac{|k|^{s_1-s}}{|j|^{2s+1}|5(k^2-kj+j^2)-3\alpha|}\lesssim_{\alpha} \sup_{k,j\ne0}\frac{|k|^{s_1-s-2}}{|j|^{2s+1}}\lesssim1$$
since $k^2+j^2-kj=\frac12(k^2+j^2+(k-j)^2)>\frac12k^2$. This completes the proof.

\subsubsection{Proof of \eqref{prop3}}
To prove \eqref{prop3}, it is sufficient to consider a local-in-time function $u$.
Using duality, we demonstrate
\begin{align}\label{dualgoal}
\nonumber
\bigg|\int_{\mathbb T^2}\mathcal R(u)(x,t)&h(x,t)dxdt\bigg|\\
&=\bigg|\sum_{k,m}\widehat{\mathcal R}(k,m)\widehat h(-k,-m)\bigg|
\lesssim_{\alpha,\epsilon}\|u\|_{X^{s,\frac{1}{2}}}^3\|h\|_{X^{-s_1,\frac12-\epsilon}}.
\end{align}

First, we note that
\begin{align*}
&\widehat{\mathcal R}(k,m)\\&=\sum_{\substack{k_1+k_2+k_3=k\\(k_1+k_2)(k_2+k_3)(k_3+k_1)\ne0}}\sum_{m_1+m_2+m_3=m}\frac{-i\widehat u(k_1,m_1)\widehat u(k_2,m_2)\widehat u(k_3,m_3)}{2k_1\{5(k_1^2 +(k_2+k_3)^2 + k_1(k_2+k_3))-3\alpha\}}.
\end{align*}
We define
\begin{align*}
	&\Phi:=\{(k_1,k_2,k_3,k_4)\in \mathbb{Z}^4:k_1+k_2+k_3+k_4=0, (k_1+k_2)(k_2+k_3)(k_3+k_1)\ne0\},\\
	&\Omega:=\{(m_1,m_2,m_3,m_4)\in\mathbb{Z}^4:m_1+m_2+m_3+m_4=0\},
\end{align*}
and we define
\begin{align*}
f_i (k_i,m_i) &:=|\widehat u(k_i,m_i)||k_i|^s\langle m_i+k_i^5-\alpha k_i^3\rangle^{\frac{1}{2}},\quad i=1,2,3,\\
f_4(k,m)&:=|\widehat h(k_4,m_4)||k_4|^{-s_1}\langle m_4+k_4^5-\alpha k_4^3\rangle^{\frac12-\epsilon},\\
\langle m_n+k_n^5-\alpha k_n^3\rangle&=\max_{i=1,2,3,4}\langle m_i+k_i^5-\alpha k_i^3\rangle,\\ S&:=\{1,2,3,4\}\backslash\{n\}.
\end{align*}
Therefore, \eqref{dualgoal} is deduced from the inequality
\begin{align}\label{a}
\nonumber \sum_{\Phi,\Omega}&\frac{|k_1k_2k_3|^{-s}|k_4|^{s_1}\prod_{i=1}^4f_i(k_i,m_i)}{|k_1||5(k_1^2 +(k_2+k_3)^2 + k_1(k_2+k_3))-3\alpha|\prod_{i=1}^4\langle m_i+k_i^5-\alpha k_i^3\rangle^{\frac12-\epsilon}}\\
&\qquad \qquad \qquad \qquad \qquad \qquad \qquad \qquad \qquad\qquad\qquad\quad \qquad\lesssim_{\alpha,\epsilon}\prod_{i=1}^4\|f_i\|.
\end{align}

Now we proceed to prove \eqref{a}.
It is noted that
\begin{align}
\nonumber
|5(k_1^2+k_2^2+k_3^2+k_1k_2+k_2k_3&+k_3k_1)-3\alpha|\\
\nonumber
&\gtrsim_{\alpha}|k_1+k_2|^2 + |k_2+k_3|^2 + |k_3+k_1|^2 \\
\label{g}
&\gtrsim (|k_1+k_2||k_2+k_3||k_3+k_1|)^{\frac23}.
\end{align}

With \eqref{g} in place, utilizing the following two inequalities to be demonstrated later:
\begin{align}\label{j}
\nonumber
\prod_{i=1}^4\langle m_i+k_i^5&-\alpha k_i^3\rangle^{\frac12-\epsilon}\\
&\gtrsim 
(|k_1+k_2||k_2+k_3||k_3+k_1|)^{\frac{5}{6}-\frac{35}{3}\epsilon}\prod_{i\in S}\langle m_i+ k_i^5-\alpha k_i^3\rangle^{\frac12+\epsilon},
\end{align}
and for $0\leq s_1<\min(s+2,3s+2)$ (sufficiently small $\epsilon>0$)
\begin{align}\label{k}
\frac{|k_1k_2k_3|^{-s}|k_4|^{s_1}}{|k_1|(|k_1+k_2||k_2+k_3||k_3+k_1|)^{\frac{3}{2}-\frac{35}{3}\epsilon}}\lesssim|k_1k_2k_3k_4|^{-\epsilon},
\end{align}
then, the left-hand side of \eqref{a} is bounded by
\begin{align}
\label{dualgoal2}
\sum_{\Phi,\Omega}\frac{|k_1k_2k_3k_4|^{-\epsilon}\prod_{i=1}^4f_i(k_i,m_i)}{\prod_{i\in S}\langle m_i+k_i^5-\alpha k_i^3\rangle^{\frac12+\epsilon}}.
\end{align}
We may remove $|k_1|^{-\epsilon}$ from \eqref{dualgoal2} and apply the following lemma from \cite{ASS} to bound \eqref{dualgoal2}:
\begin{lem}\label{l6}
	For any $\epsilon>0$ and $b>1/2$, we have
	\begin{equation*}
		\|\chi_{[0,\delta]}(t)u\|_{L_{t,x}^6(\mathbb R\times\mathbb T)}\lesssim_{\alpha,\epsilon,b}\|u\|_{X_\delta^{\epsilon,b}}.
\end{equation*}
\end{lem}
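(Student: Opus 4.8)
\emph{Step 1 (transfer to the free flow).} I would prove this $L^6_{t,x}$ estimate by the classical route: reduce, via the transfer principle, to a fixed-interval Strichartz bound for the free evolution, and then prove the latter by a counting argument based on the factorized resonance function and the divisor bound. Write $W(t):=e^{-(\partial_x^5+\alpha\partial_x^3)t}$ for the free propagator, whose symbol $e^{-it\omega(k)}$ with $\omega(k):=k^5-\alpha k^3$ is the one adapted to the $X^{s,b}$-weight. It suffices to prove the fixed-interval estimate
\[
\|W(t)g\|_{L^6_{t,x}([0,1]\times\mathbb T)}\lesssim_\epsilon\|g\|_{H^\epsilon}\qquad(\epsilon>0).
\]
Granting this, setting $v(t):=W(-t)u(t)$ and writing $u(t,x)=\int_{\mathbb R}e^{it\tau}\,W(t)V_\tau(x)\,d\tau$ with $\widehat{V_\tau}(k)=\widehat v(k,\tau)$, Minkowski's inequality, the invariance of $L^6([0,1]\times\mathbb T)$ under multiplication by $e^{it\tau}$, the displayed estimate, and Cauchy--Schwarz in $\tau$ (using $b>1/2$) give $\|u\|_{L^6_{t,x}([0,1]\times\mathbb T)}\lesssim_{\epsilon,b}\|u\|_{X^{\epsilon,b}}$; since $\delta<1$, restricting to $[0,\delta]\subset[0,1]$ and passing to the infimum over extensions in the definition of $X^{\epsilon,b}_\delta$ yields $\|\chi_{[0,\delta]}u\|_{L^6_{t,x}(\mathbb R\times\mathbb T)}\lesssim\|u\|_{X^{\epsilon,b}_\delta}$, which is the claim.

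\emph{Step 2 (reduction to one frequency block).} Littlewood--Paley decompose $g=\sum_N P_Ng$ (dyadic $N$); the $N\lesssim1$ piece is elementary, and summing in $N$ after Cauchy--Schwarz (with a parameter $\epsilon_0<\epsilon$) reduces the fixed-interval estimate to showing, for every $\epsilon_0>0$,
\[
\|W(t)P_Ng\|_{L^6_{t,x}([0,1]\times\mathbb T)}\lesssim_{\epsilon_0}N^{\epsilon_0}\|P_Ng\|_{L^2}.
\]
Fix a smooth $\eta$ with $\eta\equiv1$ on $[0,1]$ and compact support, put $f:=P_Ng$ and $v:=\eta\,W(t)f$, so that $\widehat v(k,\tau)=\widehat f(k)\,\widehat\eta(\tau+\omega(k))$. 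Since $\|v\|_{L^6}^3=\|v^3\|_{L^2}$ and, by Plancherel,
\[
\widehat{v^3}(k,\tau)=\sum_{k_1+k_2+k_3=k}\widehat f(k_1)\widehat f(k_2)\widehat f(k_3)\,G(\tau+\Lambda),\qquad \Lambda:=\omega(k_1)+\omega(k_2)+\omega(k_3),
\]
with $G:=\widehat{\eta^3}$ Schwartz, the rapid decay of $G$ confines $\tau$ to $|\tau+\Lambda|\lesssim1$ up to geometrically summable dyadic tails in $|\tau+\Lambda|$. Applying Cauchy--Schwarz in the $(k_1,k_2,k_3)$-sum and integrating the $O(1)$ $\tau$-window attached to each fixed triple, the estimate reduces to the counting bound
\[
D_N:=\sup_{k\in\mathbb Z,\,\tau\in\mathbb R}\#\Bigl\{(k_1,k_2,k_3)\in\mathbb Z^3:|k_i|\sim N,\ k_1+k_2+k_3=k,\ |\tau+\Lambda|\lesssim1,\ (k_1+k_2)(k_2+k_3)(k_3+k_1)\ne0\Bigr\}\lesssim_{\epsilon_0}N^{\epsilon_0},
\]
together with a direct treatment of the degenerate triples excluded in its definition.

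\emph{Step 3 (the counting bound and the degenerate triples).} For $k=k_1+k_2+k_3$, the power-sum identities give the factorization
\[
\Lambda-\omega(k)=-(k_1+k_2)(k_2+k_3)(k_3+k_1)\,\theta(k_1,k_2,k_3),\qquad \theta=5(k_1^2+k_2^2+k_3^2+k_1k_2+k_2k_3+k_3k_1)-3\alpha,
\]
that is, up to sign, the phase factor already appearing in \eqref{morphing3}. Setting $a=k_1+k_2$, $b=k_2+k_3$, $c=k_3+k_1$, one has $a+b+c=2k$ and $\theta=\tfrac52(a^2+b^2+c^2)-3\alpha$, so for fixed $k$ the condition $|\tau+\Lambda|\lesssim1$ reads $\bigl|\,abc\,(5(a^2+b^2+c^2)-6\alpha)-\mu\,\bigr|\lesssim1$ for a fixed real $\mu$. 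Hence the integer $\nu:=abc\,(5(a^2+b^2+c^2)-6\alpha)$ takes only $O(1)$ values, with $1\le|\nu|\lesssim N^5$ (nonzero since $a,b,c\ne0$); and for each such $\nu$ and each of its $\lesssim_{\epsilon_0}N^{\epsilon_0}$ positive divisors $d$ (necessarily $d\ge9$), imposing $5(a^2+b^2+c^2)-6\alpha=d$ and $abc=\nu/d$ fixes the symmetric functions $a+b+c$, $a^2+b^2+c^2$ and $abc$; thus $(a,b,c)$ are the roots of a single monic cubic and number at most $6$, and $(a,b,c)$ determines $(k_1,k_2,k_3)$. This proves $D_N\lesssim_{\epsilon_0}N^{\epsilon_0}$. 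For the degenerate triples, say those with $k_1+k_2=0$, one has $\Lambda=\omega(k_3)=\omega(k)$, so their contribution to $\widehat{v^3}(k,\tau)$ equals $\bigl(\sum_{k_1}\widehat f(k_1)\widehat f(-k_1)\bigr)\widehat f(k)\,G(\tau+\omega(k))$; since $\sum_{k_1}|\widehat f(k_1)\widehat f(-k_1)|\lesssim\|f\|_{L^2}^2$ and $\|G\|_{L^2_\tau}<\infty$, this contributes $O(\|f\|_{L^2}^3)$ to $\|v^3\|_{L^2}$ with no $N$-loss (and likewise for $k_2+k_3=0$ and $k_3+k_1=0$). Combining, $\|v^3\|_{L^2}\lesssim_{\epsilon_0}N^{\epsilon_0}\|f\|_{L^2}^3$, which gives the single-block bound upon taking cube roots.

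The main obstacle is the counting bound $D_N\lesssim_{\epsilon_0}N^{\epsilon_0}$; it rests on two ingredients: the algebraic factorization of $\Lambda-\omega(k)$, which through the substitution $(a,b,c)$ turns the resonance constraint into prescribing the elementary symmetric functions of a cubic, and the divisor bound $d(n)\lesssim_\epsilon n^\epsilon$. The remaining points are routine: the dyadic bookkeeping of the Schwartz tails of $G$ in the $\tau$-variable, the separate and easier handling of the degenerate triples, and the passage from $(a,b,c)$ back to $(k_1,k_2,k_3)$.
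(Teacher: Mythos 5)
The paper itself does not prove Lemma \ref{l6}: it is imported verbatim from \cite{ASS} and used as a black box, so there is no internal argument to compare yours against. Your proposal supplies a genuine proof along the standard Bourgain lines, and it is correct. The transfer step (superposition over $\tau$, Minkowski, then Cauchy--Schwarz using $b>1/2$, followed by passing to extensions for the restricted norm with $\delta<1$) is the usual one; the Littlewood--Paley reduction with $\epsilon_0<\epsilon$ is routine; and the heart of the matter, the counting bound, is handled correctly. The algebraic input is exactly the identity already recorded in the paper's proof of \eqref{j}: since $\omega(k)=k^5-\alpha k^3$ is odd, taking $k_4=-k$ gives $\Lambda-\omega(k)=-(k_1+k_2)(k_2+k_3)(k_3+k_1)\,\theta$, and your substitution $a=k_1+k_2$, $b=k_2+k_3$, $c=k_3+k_1$ with $a+b+c=2k$ and $\theta=\tfrac52(a^2+b^2+c^2)-3\alpha$ correctly turns the $O(1)$-window constraint into fixing the integer $\nu=abc\,(5(a^2+b^2+c^2)-6\alpha)$, which is nonzero (indeed $5(a^2+b^2+c^2)-6\alpha\ge 9$ for $\alpha\in\{-1,0,1\}$) and of size $\lesssim N^5$; for each of its $\lesssim_{\epsilon_0}N^{\epsilon_0}$ divisors the three elementary symmetric functions of $(a,b,c)$ are determined, so $(a,b,c)$, hence $(k_1,k_2,k_3)$, is pinned down up to permutation, uniformly in $k$ and $\tau$. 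Your separate treatment of the degenerate triples is also essential and done correctly: when, say, $k_1+k_2=0$ one has $\Lambda=\omega(k)$, so these $\sim N$ triples cannot be absorbed into the counting bound, and estimating their contribution by $\sum_{k_1}|\widehat f(k_1)\widehat f(-k_1)|\le\|f\|_{L^2}^2$ is the right move. Two cosmetic remarks: you overload the letter $b$ (Bourgain exponent versus $k_2+k_3$), and when treating the three degenerate families you should split them into disjoint subcases (e.g.\ $k_1+k_2=0$; then $k_2+k_3=0$ with $k_1+k_2\ne0$; etc.) rather than invoking them symmetrically, to avoid double counting --- harmless, but worth stating.
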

Indeed, using the convolution structure, along with Plancherel's theorem and H"older's inequality as well as Lemma \ref{l6}, \eqref{dualgoal2} is controlled as
\begin{align*}
\sum_{\Phi,\Omega}&\frac{|k_2k_3k_4|^{-\epsilon}\prod_{i=1}^4f_i(m_i,k_i)}{\prod_{i\in S}\langle m_i+k_i^5-\alpha k_i^3\rangle^{\frac12+\epsilon}}\\
&\qquad\qquad \quad \le \int_{\mathbb T^2}\left|\check{f_1}(x,t)\prod_{i=2}^4\bigg(\frac{f_i|k_i|^{-\epsilon}}{\langle m_i+ k_i^5-\alpha k_i^3\rangle^{\frac12+\epsilon}}\bigg)^\vee(x,t)\right|dxdt\\
&\qquad\qquad\quad\le\|f_1\|_{L^2}\prod_{i=2}^4\bigg\|\bigg(\frac{f_i|k|^{-\epsilon}}{\langle m+k^5-\alpha k^3\rangle^{\frac12+\epsilon}}\bigg)^\vee\bigg\|_{L^6}\lesssim_{\alpha,\epsilon}\prod_{i=1}^4\|f_i\|_{L^2},
\end{align*}
as needed.

It remains to prove \eqref{j} and \eqref{k}. We begin by demonstrating \eqref{j}. Given that $m_1+m_2+m_3+m_4=0$ and $k_1+k_2+k_3+k_4=0$, it follows that
\begin{align*}
\sum_{i=1}^4&m_i+k_i^5-\alpha k_i^3\\
&=-(k_1+k_2)(k_2+k_3)(k_3+k_1)\{5(k_1^2+k_2^2+k_3^2+k_1k_2+k_2k_3+k_3k_1)-3\alpha\}.
\end{align*}
Consequently, by using \eqref{g},
\begin{align}
\nonumber
\langle m_n+ k_n^5-\alpha k_n^3\rangle \gtrsim \sum_{i=1}^4 |m_i+ k_i^5-\alpha k_i^3|&\ge \Big|\sum_{i=1}^4 m_i+ k_i^5-\alpha k_i^3\Big|\\ 
\label{e}
&\gtrsim (|k_1+k_2||k_2+k_3||k_3+k_1|)^{\frac53}.
\end{align}
Using \eqref{e}, we can then conclude that
\begin{align*}
\prod_{i=1}^4\langle &m_i+ k_i^5-\alpha k_i^3\rangle^{\frac12-\epsilon}\\
&=\langle m_n+ k_n^5-\alpha k_n^3\rangle^{\frac12-\epsilon}\prod_{i\in S}\langle m_i+ k_i^5-\alpha k_i^3\rangle^{-2\epsilon}\langle m_i+ k_i^5-\alpha k_i^3\rangle^{\frac12+\epsilon}\\
&\ge\langle m_n+ k_n^5-\alpha k_n^3\rangle^{\frac12-7\epsilon}\prod_{i\in S}\langle m_i+ k_i^5-\alpha k_i^3\rangle^{\frac12+\epsilon}\\
&\gtrsim_{\alpha}(|k_1+k_2||k_2+k_3||k_3+k_1|)^{\frac{5}{6}-\frac{35}{3}\epsilon}\prod_{i\in S}\langle m_i+k_i^5-\alpha k_i^3\rangle^{\frac12+\epsilon},
\end{align*}
thereby establishing \eqref{j}.

Finally, we establish \eqref{k}. We begin by noting that
\begin{equation}\label{dualwork2}
	|k_1+k_2||k_2+k_3||k_3+k_1|\gtrsim|k_i|,\quad i=1,2,3,4,
\end{equation}
because $(k_1+k_2)(k_2+k_3)(k_1+k_3)\neq0$ and $k_1+k_2+k_3+k_4=0$.
From \eqref{dualwork2}, the inequality \eqref{k} is derived from
\begin{equation*}
\frac{|k_1k_2k_3|^{-s}|k_4|^{s_1}}{|k_1|(|k_1+k_2||k_2+k_3||k_3+k_1|)^{\frac{3}{2}-\frac{47}{3}\epsilon}}\lesssim 1.
\end{equation*}
Using $|k_1||k_1+k_2|\gtrsim|k_2|$ and $|k_1||k_1+k_3|||k_2+k_3|\gtrsim|k_2|$, and by symmetry among $k_2$, $k_3$, we observe \begin{align*}
&|k_1|(|k_1+k_2||k_2+k_3||k_3+k_1|)^{\frac{3}{2}-\frac{47}{3} \epsilon}\\
&\,\,=|k_1|(|k_1+k_2||k_2+k_3||k_3+k_1|)^{\frac{1}{2}-\frac{47}{3} \epsilon}|k_1+k_2||k_2+k_3||k_3+k_1|\gtrsim M^{2-47\epsilon}
\end{align*}
where $M=\max\{|k_1|,|k_2|,|k_3|\}$.
Thus, we also find
\begin{equation}\label{m}
\frac{|k_1k_2k_3|^{-s}|k_4|^{s_1}}{|k_1|(|k_1+k_2||k_2+k_3||k_3+k_1|)^{\frac{3}{2}-\frac{47}{3}\epsilon}} \lesssim \frac{|k_1k_2k_3|^{-s}|k_4|^{s_1}}{M^{2-47\epsilon}}.
\end{equation}
Since 
\begin{equation*}
|k_1k_2k_3|^{-s} \leq
\begin{cases}
M^{-s} \quad \textrm{if}\quad s\ge0, \\ M^{-3s} \quad \textrm{if} \quad s\le0,	
\end{cases}	
\end{equation*}
it follows that $|k_1k_2k_3|^{-s} \leq M^{-\min(s,3s)}.$
Using this information, for $0\leq s_1 < \min(s+2,3s+2)$ and sufficiently small $\epsilon$, the right-hand side of \eqref{m} is bounded as
$$M^{-\min(s,3s)+s_1-2+47\epsilon}\lesssim 1.$$
This conclusion finalizes the proof.
\subsection{The last step of the proof}
We now conclude the proof of Proposition \ref{thm1}.
By applying the estimates from Proposition \ref{goodlem} to the equation \eqref{morphback}, 
we derive for $s>-\frac12$ and $0\leq s_1<\min(s+2,3s+2)$
\begin{align}
\nonumber
\|u(t)-e^{Lt}g\|_{H^{s_1}}&\lesssim_{\alpha}\|u(t)\|_{H^s}^2+\|g\|_{H^s}^2+\int_0^t\|u(r)\|_{H^s}^2dr+\int_0^t\|u(r)\|_{H^s}^3dr\\
\label{int}
&\qquad\quad+\bigg\|\int_0^te^{L(t-r)}\mathcal R(u)(r)dr\bigg\|_{H^{s_1}}.
\end{align}
Next, we address the final term in \eqref{int} by utilizing the embedding
$$X_{\delta}^{s,b}\subset L_t^\infty H_x^s([0,\delta]\times \mathbb{T}), \quad b>1/2,$$
and by referring to the following lemma from \cite{GTV}:
\begin{lem}\label{ri}
	Let $b>1/2$.
	Then for any $\delta<1$,
$$\bigg\|\int_0^te^{L(t-r)}F(r)dr\bigg\|_{X_\delta^{s,b}}\lesssim_{b}\|F\|_{X_\delta^{s,b-1}}.$$
\end{lem}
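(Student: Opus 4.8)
The plan is to argue as in the standard treatment of the inhomogeneous term in $X^{s,b}$ spaces (the estimate of Ginibre--Tsutsumi--Velo \cite{GTV}): reduce to an unrestricted-in-time estimate via the definition of $\|\cdot\|_{X_\delta^{s,b}}$, conjugate out the linear flow, descend to spatial Fourier coefficients so that only a scalar one-variable inequality survives, and then dispatch that inequality by an explicit computation on the Fourier side. Throughout write $L_0=-\partial_x^5-\alpha\partial_x^3$ for the part of $L$ whose symbol $-i(k^5-\alpha k^3)$ matches the modulation weight $\langle\tau+k^5-\alpha k^3\rangle$ of $X^{s,b}$; the extra first-order term $-[\int_\mathbb T g]\,\partial_x$ of $L$ merely inserts a further $k$-dependent phase and is harmless precisely as noted in Section~\ref{sec1} (alternatively one folds it into the weight), so I suppress it below.

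First I would reduce to the non-restricted estimate. Fix once and for all $\psi\in C_c^\infty(\mathbb R)$ with $\psi\equiv1$ on $[-1,1]$. Given $F$ on $[0,\delta]\times\mathbb T$, pick an extension $G$ to $\mathbb R\times\mathbb T$ with $\|G\|_{X^{s,b-1}}\le 2\|F\|_{X_\delta^{s,b-1}}$; since $\delta<1$ and $\psi\equiv1$ on $[0,\delta]$, the function $\psi(t)\int_0^t e^{L_0(t-r)}G(r)\,dr$ agrees with $\int_0^t e^{L_0(t-r)}F(r)\,dr$ on $[0,\delta]$, so it is an admissible competitor in the infimum defining $\|\cdot\|_{X_\delta^{s,b}}$ (this is the only use of $\delta<1$). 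It therefore suffices to prove
$$\Bigl\|\,\psi(t)\!\int_0^t e^{L_0(t-r)}G(r)\,dr\,\Bigr\|_{X^{s,b}}\ \lesssim_b\ \|G\|_{X^{s,b-1}}.$$
Applying $e^{-L_0 t}$ (i.e. conjugating out the linear flow) turns $\|\cdot\|_{X^{s,b}}$ into $\|\cdot\|_{H^s_x H^b_t}$ and sends the left side to $\|\psi(t)\int_0^t \tilde G(r)\,dr\|_{H^s_x H^b_t}$ with $\tilde G(r):=e^{-L_0 r}G(r)$ satisfying $\|\tilde G\|_{H^s_x H^{b-1}_t}=\|G\|_{X^{s,b-1}}$. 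Because this norm is $\ell^2$ over the spatial frequency with a weight that factors, it is enough --- treating one $x$-frequency at a time and resumming with the weight $\langle k\rangle^{2s}$ --- to establish the scalar bound
$$\Bigl\|\,\psi(t)\!\int_0^t h(r)\,dr\,\Bigr\|_{H^b_t(\mathbb R)}\ \lesssim_b\ \|h\|_{H^{b-1}_t(\mathbb R)},\qquad b>\tfrac12 .$$

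To prove the scalar bound I would write $h(r)=\int e^{ir\lambda}\widehat h(\lambda)\,d\lambda$, so that $\int_0^t h(r)\,dr=\int \frac{e^{it\lambda}-1}{i\lambda}\,\widehat h(\lambda)\,d\lambda$, and split the $\lambda$-integral at $|\lambda|=1$. On $|\lambda|\le1$, expanding $e^{it\lambda}-1=\sum_{n\ge1}\frac{(it\lambda)^n}{n!}$ gives $\sum_{n\ge1}\frac{i^{n-1}}{n!}\,\bigl(\psi(t)t^n\bigr)\int_{|\lambda|\le1}\lambda^{n-1}\widehat h(\lambda)\,d\lambda$; each integral is $\lesssim\|\widehat h\|_{L^1(|\lambda|\le1)}\lesssim\|h\|_{H^{b-1}}$ by Cauchy--Schwarz on a bounded set, each $\psi(t)t^n$ is Schwartz with $\|\psi(t)t^n\|_{H^b_t}\lesssim_b C^{\,n}$, and the resulting series converges. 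On $|\lambda|>1$ I would write $\frac{e^{it\lambda}-1}{i\lambda}=\frac{e^{it\lambda}}{i\lambda}-\frac1{i\lambda}$: the first piece contributes $\psi(t)\,\bigl(\mathbf 1_{|\lambda|>1}\widehat h(\lambda)/(i\lambda)\bigr)^{\vee}(t)$, whose $H^b_t$-norm is $\lesssim\|\mathbf 1_{|\lambda|>1}\langle\lambda\rangle^b\lambda^{-1}\widehat h\|_{L^2}\lesssim\|h\|_{H^{b-1}}$ since $\langle\lambda\rangle^b/|\lambda|\lesssim\langle\lambda\rangle^{b-1}$ there and multiplication by $\psi\in\mathcal S$ is bounded on $H^b_t$; the second piece is $\psi(t)$ times the constant $\int_{|\lambda|>1}\frac{\widehat h(\lambda)}{i\lambda}\,d\lambda$, whose modulus is $\lesssim\|\langle\lambda\rangle^{b-1}\widehat h\|_{L^2}\bigl(\int_{|\lambda|>1}\langle\lambda\rangle^{2-2b}\lambda^{-2}\,d\lambda\bigr)^{1/2}\lesssim_b\|h\|_{H^{b-1}}$. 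Summing the four contributions yields the scalar bound, hence the lemma.

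I expect this last constant term to be the only point requiring any thought: the integral $\int_{|\lambda|>1}\langle\lambda\rangle^{2-2b}\lambda^{-2}\,d\lambda$ is finite exactly when $b>\tfrac12$, which is where (and the only place) the hypothesis enters, and it is also the reason the estimate genuinely fails at the endpoint $b=\tfrac12$. Everything else --- the extension step, the conjugation, the spatial resummation, and the low-frequency power series --- is routine bookkeeping.
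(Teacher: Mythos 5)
The paper itself does not prove this lemma; it is quoted directly from Ginibre--Tsutsumi--Velo \cite{GTV} without argument, so there is no in-paper proof to compare against. Your proposal is a correct, self-contained proof and follows the standard route from that reference (and from the treatments in Tao's and Erdo\u{g}an--Tzirakis's books): extend off $[0,\delta]$, multiply by a fixed time cutoff $\psi$ with $\psi\equiv1$ on $[-1,1]$ so that $\delta<1$ is the only role of the restriction, conjugate by $e^{-L_0t}$ to turn $X^{s,b}$ into $\ell^2_k$ of one-variable $H^b_t$ norms, and reduce to the scalar inequality $\|\psi(t)\int_0^t h\|_{H^b_t}\lesssim_b\|h\|_{H^{b-1}_t}$, which you then establish by splitting the dual variable at $|\lambda|=1$.

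Your handling of the scalar estimate is sound. Two small remarks on rigor, neither a gap: (i) the bound $\|\psi(t)t^n\|_{H^b_t}\lesssim_b C^n$ should really be read as $\lesssim_b n^{\lceil b\rceil}C^n$ (Leibniz for $\lceil b\rceil$ derivatives, then $H^{\lceil b\rceil}\hookrightarrow H^b$), which is still $O(C'^n)$ and is dominated by $1/n!$, so the series converges; (ii) one should, strictly speaking, first check that $\psi(t)\int_0^t e^{L_0(t-r)}G\,dr$ is a well-defined tempered distribution for $G\in X^{s,b-1}$, but this follows by the usual density approximation once the a priori bound is in hand. You correctly identify that $b>1/2$ enters only in the convergence of $\int_{|\lambda|>1}\langle\lambda\rangle^{-2b}\,d\lambda$ (the constant term coming from $-1/(i\lambda)$), and that this is precisely why the estimate fails at $b=1/2$ --- which is the reason the paper works with the auxiliary $Y^s$/$Z^s$ norms rather than $X^{s,\pm1/2}$ alone for the contraction argument. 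Also worth noting explicitly: the extra first-order piece of $L$ just shifts the modulation weight to $\langle\tau+k^5-\alpha k^3+ck\rangle$, under which the whole argument goes through verbatim, confirming the harmlessness you invoked.
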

For $b>1/2$ and $t<\delta$ with $\delta$ given in Theorem \ref{thm2}, we see
\begin{align}\label{rinitiate3}
\nonumber
\bigg\|\int_0^t e^{L(t-r)}\mathcal R(u)(r)dr\bigg\|_{H^{s_1}}&\le\bigg\|\int_0^t e^{L(t-r)}\mathcal R(u)(r)dr\bigg\|_{L_{t\in[0,\delta]}^\infty H_x^{s_1}}\\
\nonumber
&\lesssim\bigg\|\int_0^te^{L(t-r)}\mathcal R(u)(r)dr\bigg\|_{X_\delta^{s_1,b}}\\
&\lesssim_{b}\|\mathcal R(u)\|_{X_\delta^{s_1,b-1}}\lesssim_{\alpha,b}\|u\|_{X_\delta^{s,1/2}}^3.
\end{align}
Here, for the last inequality, we used \eqref{prop3}.

Combining \eqref{int} with \eqref{rinitiate3}, it is observed that for $t<\delta$,
\begin{align*}
\|u(t)-e^{Lt}g\|_{H^{s_1}}\lesssim\|u(t)\|_{H^s}^2+\|g\|_{H^s}^2+\int_0^t\big(\|u(r)\|_{H^s}^2+\|u(r)\|_{H^s}^3\big)dr+\|u\|_{X_\delta^{s,1/2}}^3.
\end{align*}
The implicit constants later in the proof are dependent on $|g|{H^s}.$
Next, fix a large $t$.
For $r\le t$, it is held that $|u(r)|{H^s}\lesssim T(r) \leq T(t)$.
Here, we assume, without loss of generality, that $T(t)\ge1$.
From the local estimate \eqref{bd}, for any $j\le t/\delta$, it follows
$$\|u\|_{X_{[(j-1)\delta,j\delta]}^{s,1/2}}\lesssim\|u((j-1)\delta)\|_{H^s}\lesssim T(t).$$
As a result, choosing $\delta \approx T(t)^{-\frac52}$ ensures
$$\|u(j\delta)-e^{\delta L}u((j-1)\delta)\|_{H^{s_1}}\lesssim T(t)^3.$$
Applying this, with $J=t/\delta$, we deduce
\begin{align*}
\|u(J\delta)-e^{J\delta L}g\|_{H^{s_1}}&\le\sum_{j=1}^J\|e^{(J-j)\delta L}u(j\delta)-e^{(J-j+1)\delta L}u((j-1)\delta)\|_{H^{s_1}}\\
&=\sum_{j=1}^J\|u(j\delta)-e^{\delta L}u((j-1)\delta)\|_{H^{s_1}}\\
&\lesssim JT(t)^3 \approx t T(t)^{\frac{11}{2}}.
\end{align*}
This completes the proof. Following similar steps as outlined above, one can easily demonstrate the continuity of $u(t)-e^{Lt}g$ in $H^{s_1}$.

\end{document}